\newtheorem{theorem}{Theorem}[section]
\newtheorem{lemma}[theorem]{Lemma}
\newtheorem{proposition}[theorem]{Proposition}
\newtheorem{corollary}[theorem]{Corollary}  
\newtheorem{definition}[theorem]{Definition}
\theoremstyle{definition}
\def \P {{\mathbb P}}
\newcommand{\E}{\mathbb E}
\newcommand{\HH}{\mathbb H}
\newcommand{\T}{\mathbb T}
\newcommand{\Q}{\mathbb Q}
\newcommand{\R}{\mathbb R}
\newcommand{\N}{\mathbb N}
\newcommand{\Z}{\mathbb Z}
\newcommand{\C}{\mathbb C}
\theoremstyle{remark}
\newtheorem{remark}[theorem]{Remark}
\numberwithin{equation}{section}
\begin{document}

\title[Matsumoto\,--Yor  process]{The Matsumoto and Yor  process and infinite dimensional hyperbolic space}

\author{ Philippe Bougerol}
\address{Laboratoire de Probabilit\'es et mod\`eles al\'eatoires,  
Universit\'e Pierre et Marie Curie, 4, Place Jussieu, 75005 Paris,  
FRANCE}
\email{philippe.bougerol@upmc.fr}

\subjclass{Primary 60J65, 60J27; Secondary 51M10, 53C35}

\date{}
\maketitle
\begin{center}
\textit{A la m\'emoire de Marc Yor, avec admiration. }
\end{center}

\begin{abstract}
The Matsumoto\,--Yor  process is $\int_0^t \exp(2B_s-B_t)\, ds$, where $(B_t)$ is a Brownian motion. It is shown that it is the limit of the radial part of the Brownian motion at the bottom of the spectrum on the hyperbolic space of dimension $q$, when $q$ tends to infinity. Analogous processes on infinite series of non compact symmetric spaces and on regular trees are described.
\end{abstract}

\setcounter{tocdepth}{2}
\tableofcontents

\setlength{\unitlength}{0.7mm}

\section{Introduction}

The aim of this paper is mainly to see in the first part that the Matsumoto\,--Yor  process \cite{Matsumoto_Yor_1}
$$ \eta_t=\int_0^{t} e^{2 B_s-B_t}\, ds, t \geq 0,$$
where $(B_t)$ is a standard Brownian motion, 
appears naturally as the radial part of the Brownian motion on the hyperbolic space $\HH_q$ at the bottom of the spectrum
when the dimension $q\to +\infty$. Marc Yor \cite{Yor} asked in 1999 whether there is a geometric interpretation of this process which provides a direct proof of its Markovianity. 
 Notice that $$\int_0^{t} e^{\mu B_s-B_t}\, ds, t \geqÊ0,$$ is a Markov process only for $\mu=1$ and $\mu=2$ (this follows from \cite{Matsumoto_Ogura} by a scaling and limit argument). The case $\mu=1$ is easy since it follows from Ito's formula that this process is solution of a stochastic differential equation. Things are more subtle for $\mu=2$ because the process is not Markov for the filtration $\sigma(B_s, 0 \leqÊs \leqÊt)$.
  
In a second part, we describe the limit of the radial part of the Brownian motion for the three infinite series of symmetric spaces of higher rank, namely $SO(p,q),$ $SU(p,q),$ $Sp(p,q)$ when $p$ is fixed and $q\to +\infty$. For $SU(p,q)$ we give the generator of the limit by studying the asymptotic behaviour of spherical functions (such an analysis is not available yet for the other cases).

In the higher rank case, these processes are different from the Whittaker processes obtained by O'Connell \cite{OConnell} and Chhaibi \cite{Chhaibi} who also generalize the Matsumoto\,--Yor  process for real split semi simple groups, but in a different (and more interesting) direction linked  with representation theory  and  geometric crystals.

In the last part we show that in the $q$-adic case of rank one, or more generally on regular trees $\T_q$, the radial part of the simple random walk at the bottom of the spectrum converges when $q\to \infty$ to the discrete Pitman process $$2\max_{0 \leq k \leqÊn}{\Sigma_k}-\Sigma_n,n \in \N,$$ where $(\Sigma_n)$ is the simple random walk on $\Z$. Contrary to the real case we use here only elementary arguments and the treatment is self-contained.

In an appendix we describe how to modify Shimeno \cite{Shimeno} in order to obtain the needed asymptotics for spherical functions in rank one. 
\section{The Matsumoto\,--Yor  process}
Let us first recall a now classical theorem of Pitman \cite{Pitman}.
 Let $B_t, t \geqÊ0,$ be a standard real Brownian motion starting at $0$. 
 \begin{theorem}[Pitman, \cite{Pitman}] The process
 $$2\max_{0 \leq s \leq t}B_s - B_t, t \geq 0,$$
 is a Markov process on $\R^+$. It is the Bessel(3) process with generator
 $$\frac{1}{2}\frac{d^2}{dr^2}+\frac{1}{r}\frac{d}{dr}.$$
 \end{theorem}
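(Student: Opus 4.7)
Plan.
The plan is to show that $X_t := 2M_t - B_t$, where $M_t := \sup_{s \le t} B_s$, satisfies in its own natural filtration $(\F^X_t)$ the Bessel(3) stochastic differential equation $dX_t = d\beta_t + X_t^{-1}\, dt$ for some $(\F^X_t)$-Brownian motion $\beta$. Strong uniqueness for this SDE started from $0$ identifies $X$ in law with the Bessel(3) process, a diffusion on $\R^+$ with generator $\tfrac12\partial_r^2+r^{-1}\partial_r$, and the Markov property is then automatic.

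First I would record the semimartingale decomposition $X_t = -B_t + 2M_t$ in the Brownian filtration $(\F^B_t)$: the continuous martingale part $-B_t$ is itself a Brownian motion, and the finite-variation part $2M_t$ is continuous, nondecreasing, and supported on the zero-Lebesgue-measure set $\{s : B_s = M_s\}$. Second, I would observe the pathwise identity
\[ M_t = \inf_{s \ge t} X_s \quad \text{a.s.}, \]
which holds because $X_s \ge M_s \ge M_t$ for $s \ge t$, with equality reached at the first time $s^\ast \ge t$ at which $B$ returns to the level $M_t$ (an a.s.\ finite stopping time by neighbourhood recurrence of linear Brownian motion). Thus $M$ is encoded in $X$ alone and need not be added as extra data.

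The core of the proof, and the main obstacle, is the projection of this decomposition from $(\F^B_t)$ onto $(\F^X_t)$. From the reflection principle one computes the joint density of $(X_t, M_t)$ and finds that $X_t$ has the Bessel(3) marginal $(2x^2/\sqrt{2\pi t^3})\, e^{-x^2/(2t)}$, while conditionally on $X_t = x$ the variable $M_t$ is uniform on $[0, x]$. To avoid the circularity in identifying the $(\F^X_t)$-conditional law of $M_t$ before knowing that $X$ is Markov, I would invoke the Rogers--Pitman intertwining criterion: verify $k P_t = Q_t k$, where $P_t$ is the transition semigroup of the Markov process $(B_t, M_t)$, $Q_t$ is the candidate Bessel(3) semigroup on $\R^+$, and $k(y, \cdot)$ is the kernel $\mathrm{Law}(2U - y, U)$ with $U$ uniform on $[0, y]$. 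Conceptually this intertwining is the Doob $h$-transform by the harmonic function $h(x) = x$ for Brownian motion on $(0, \infty)$ killed at $0$; verifying it by explicit computation with the joint density is the place where the analytical work lies, after which Rogers--Pitman delivers the Markov property of $X$ and its Bessel(3) identification.
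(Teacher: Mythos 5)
Your proof is correct in outline, but it takes a genuinely different route from what the paper does with Pitman's theorem. The paper does not prove Pitman's theorem directly: it states it as a classical result citing Pitman, remarks that it can be recovered from the Matsumoto--Yor theorem by Brownian scaling and Laplace's method, and in Section~5 re-derives only the \emph{discrete} Pitman theorem ($2\max_{k\le n}\Sigma_k-\Sigma_n$ is the discrete Bessel(3) chain) as a corollary of the large-$q$ limit of the $\varphi_\omega$-ground-state walk on the tree $\T_q$. You instead give the self-contained Rogers--Pitman intertwining argument in the continuum: starting from the semimartingale decomposition $X_t=-B_t+2M_t$, the recovery identity $M_t=\inf_{s\ge t}X_s$, and the reflection-principle computation of the law of $(X_t,M_t)$, you verify $\Lambda P_t = Q_t\Lambda$ for the kernel $\Lambda(y,\cdot)=\mathrm{Law}(2U-y,U)$, $U\sim\mathrm{Unif}[0,y]$, and conclude that $X$ is Markov with the BES(3) semigroup (equivalently the Doob $h$-transform, $h(x)=x$, of Brownian motion killed at $0$). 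Both approaches are valid; the paper's route is indirect and thematic (it is precisely the geometric/ground-state picture the whole paper is building), whereas yours is a direct, classical probabilistic proof that makes no appeal to the tree model or to Matsumoto--Yor. You are right to flag the circularity danger in trying to read off $\E[M_t\mid\F^X_t]$ from the SDE picture, and the Rogers--Pitman criterion is exactly the standard way around it; the two remaining points you leave as routine (the explicit verification of the intertwining, and strong uniqueness for the BES(3) SDE from $0$ with $0$ as entrance boundary) are indeed standard and do not constitute gaps in the argument.
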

 In 1999, Matsumoto and Yor  \cite{Matsumoto_Yor_1, Matsumoto_Yor_2, Matsumoto_Yor_3}, have found the following exponential generalization of Pitman's theorem.
 \begin{theorem}[Matsumoto and Yor, \cite{Matsumoto_Yor_1}] The process $$\eta_t=\int_0^{t} e^{2 B_s-B_t}\, ds, t \geqÊ0,$$
is a Markov process and the generator of 
 $\log \eta_t$
is
  $$\frac{1}{2}\frac{d^2}{dr^2}+(\frac{d}{dr} \log K_0(e^{-r}))\frac{d}{dr},$$
where $K_0$ is the Macdonald function.
\end{theorem}
Recall that, for $\lambda\in \R$, the classical Macdonald function $K_\lambda$ is 
    $$K_{\lambda}(x)=
   \frac{1}{2}({\frac{x}{2}})^\lambda \int_0^{\infty}\frac{e^{-t}e^{-x^2/4t}}{t^{1+\lambda}} \, dt.$$
 A geometric intuition of the Markovianity of this process does not follow clearly from the rather intricate known proofs (either the original ones (\cite{Matsumoto_Yor_1, Matsumoto_Yor_2}) or Baudoin \cite{Baudoin}, see also \cite{Baudoin_OConnell}). Pitman's theorem can be recovered by Brownian scaling and Laplace's method.

  \section{Brownian motion on hyperbolic spaces}
  \subsection{Hyperboloid model} 
  There are many realizations of the hyperbolic spaces (see for instance Cannon et al.\,\cite{Cannon}). We will consider two of them: the hyperboloid and the upper half-space models. A very convenient reference for us is Franchi and Le Jan's book \cite{Franchi_LeJan}, which we will follow. A more Lie theoretic approach will be applied in Section \ref{section_higher} for the higher rank case (and of course could also be used here).
  
  For  $\xi,\xi' \in \R^{q+1}$ let
  $$\langle \xi, \xi' \rangle =\xi_0\xi'_0-\sum_{k=1}^q\xi_k\xi'_k.$$
  The hyperbolic space $\HH_{q}$ of dimension $q$  is 
  $$\HH_{q}=\{\xi\in \R^{q+1}; \langle \xi, \xi \rangle=1, \xi_0 >0\},$$
which is the upper sheet of an hyperboloid,  with the Riemannian metric $d$ defined by
  $$\cosh d(\xi,\xi')= \langle \xi, \xi' \rangle.$$
The group $SO_0(1,q)$ is the connected component of the identity in $$\{g\in Gl(q+1,\R); \langle g\xi, g\xi' \rangle = \langle \xi, \xi' \rangle, \xi, \xi' \in \R^{q+1}\}.$$  It acts by isometry on $\HH_{q}$ by matrix multiplication on $\R^{q+1}$  .
Let    $\{e_0,\cdots,e_q\}$ be the canonical basis of $\R^{q+1}$.  We consider $e_0$ as an origin in $\HH_{q}$ and we let $o=e_0$.   The subgroup $$K=\{g \in SO_o(1,q); ge_0=e_0\}$$ is isomorphic to $SO(q)$, it is a maximal compact subgroup of $SO_o(1,q)$ and $$\HH_{q}=SO_o(1,q)/K.$$ Each $\xi \in \HH_{q}$ can be written uniquely as 
  $$\xi=(\cosh r) e_0 + (\sinh r) \varphi,$$
  where $r \geq 0, \varphi \in S^{q-1}=\{\sum_{k=1}^q \varphi_ke_k; \sum_{k=1}^q \varphi_k^2=1\}$. One has  $d(o,\xi)=r$ and $(r,\varphi)$ are called the polar coordinates of $\xi$.    
The Laplace Beltrami operator $\Delta$ on $\HH_{q}$ is given in these coordinates by (see \cite[Prop.~3.5.4]{Franchi_LeJan})
\begin{equation}\label{LapBel}\Delta=\frac{\partial^2}{\partial r^2}+(q-1)\coth r \frac{\partial}{\partial r}+\frac{1}{\sinh^2 r}\Delta_{S^{q-1}}^{\varphi},\end{equation}
where $\Delta_{S^{q-1}}^{\varphi}$ is the Laplace operator on the sphere $S^{q-1}$ acting on the $\varphi$-variable. 

We denote $(\xi_t)$ the Riemannian Brownian motion on $\HH_{q}$,  defined as  the diffusion process with generator $\Delta/2$, starting from the origin $o=e_0$.
Since the group $K$ acts transitively on the spheres $\{\xi\in \HH_q; d(o,\xi)=r\}$,  $d(o, \xi_t)$ is a Markov process, called the radial part of $\xi_t$. It follows from (\ref{LapBel}) that its generator is $\Delta_R/2$ where \begin{equation}\label{DeltaR}\Delta_R=\frac{d^2}{d r^2}+(q-1) \coth r \frac{d}{dr}.\end{equation}

  \subsection{Upper half space model}  
 We introduce the upper half space model following Franchi and Le Jan \cite{Franchi_LeJan}:
 we consider the square matrices $\tilde E_j, 1 \leq j \leq 1+q$,  of order $q+1$ given by the expression
$$
t\tilde E_{1}+\sum_{j=1}^{q-1 }{x_{j}\tilde E_{j+1}}=\begin{pmatrix} 
0 & t  &  x_{1}  & x_{2} & .. & x_{q-1}\\
t & 0  &  x_{1} & x_{2} & .. & x_{q-1}   \\
x_{1} & -x_{1}  &  0 & 0 & .. & 0   \\
x_{2} & -x_{2}  &  0 & 0 & .. & 0   \\
.. & .. &  .. & .. &  ..   & .. \\
x_{q-1} & -x_{q-1} &  0 & 0 &  ..   & 0 \\
\end{pmatrix}, $$
when $t,x_1,\cdots,x_{q-1} \in \R.$
 Then
  $$\mathfrak S=\{t\tilde E_{1}+\sum_{j=1}^{q-1 }{x_{j}\tilde E_{j+1}}, t\in \R, x\in \R^{q-1}\}$$  is a solvable subalgebra of the Lie algebra of $SO_0(1,q)$. 
For $x\in \R^{q-1}$ and $y >0$ let
   $$T_{x,y}= \exp (\sum_{j=1}^{q-1} x_{j}\tilde E_{j+1}) \exp ((\log y) \tilde E_1).$$
 Then $$S=\{T_{x,y}; (x,y)\in \R^{q-1}\times \R_+^*\}$$
  is the Lie subgroup of $SO_o(1,q)$ with Lie algebra $\mathfrak S$. Moreover,
  $T_{x,y}T_{x',y'}=T_{x+yx',yy'}$
 (see \cite[Proposition 1.4.3]{Franchi_LeJan}). Therefore the map $(x,y)\mapsto T_{x,y}$ is an isomorphism between the affine group of $\R^{q-1}$, namely the semi-direct product  $\R^{q-1}\rtimes  \R_+^*$, and the group $S$. 
  One has (\cite[Proposition 2.1.3, Corollary 3.5.3]{Franchi_LeJan}),\begin{proposition}\label{dist}
The map $\tilde T: \R^{q-1}\times \R_+^* \to \HH_q$ given by 
$\tilde T(x,y)=T_{x,y}e_0$ is a diffeomorphism. In these so-called horocyclic or Poincar\'e coordinates $(x,y)\in \R^{q-1}\times \R_+^*$, the hyperbolic distance is given by
$$\cosh d(\tilde T(x,y),\tilde T(x',y'))=\frac{\|x-x'\|^2+y^2+y'^2}{2yy'}.$$
The pull back of the Laplace Beltrami operator $\Delta$ on $\HH_q$ is
$$y^2\frac{\partial^2}{\partial y^2}+(2-q)y\frac{\partial}{\partial y}+\Delta_{q-1}^x,$$
where $\Delta_{q-1}^x$ is the Euclidean Laplacian of $\R^{q-1}$ acting on the coordinate $x$.
\end{proposition}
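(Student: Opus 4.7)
The plan is to handle the three claims of the proposition one after the other.

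For the diffeomorphism, I would argue group-theoretically. The parameterisation $(x,y)\mapsto T_{x,y}$ is by construction a smooth bijection of $\R^{q-1}\times \R_+^*$ onto the solvable subgroup $S$, and the group law $T_{x,y}T_{x',y'}=T_{x+yx',yy'}$ exhibits $S$ as the affine group $\R^{q-1}\rtimes \R_+^*$. It therefore suffices to show that $S$ acts simply transitively on $\HH_q$; this is an instance of the Iwasawa decomposition $SO_o(1,q)=SK$, and can be checked directly here since $\mathfrak S$ is a maximal solvable subalgebra complementary to the Lie algebra of $K\cong SO(q)$. Transitivity of $S$ on $\HH_q=SO_o(1,q)/K$ reduces to $SK=SO_o(1,q)$, and freeness reduces to $S\cap K=\{e\}$, both of which are immediate from the block structure of the generators $\tilde E_j$.

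For the distance formula I would compute $T_{x,y}e_0\in \R^{q+1}$ in closed form. Writing $N=\sum_j x_j \tilde E_{j+1}$, the two identities $Ne_0=\sum_j x_j e_{j+1}$ and $N(e_0+e_1)=0$ show that $N$ is nilpotent of small index on the span of $e_0,e_1$, so $\exp(N)e_0$ and $\exp(N)e_1$ are just polynomials of degree two in $N$. Combining with the elementary boost $\exp((\log y)\tilde E_1)e_0=\cosh(\log y)e_0+\sinh(\log y)e_1$ gives $T_{x,y}e_0$ explicitly, and then $\cosh d(T_{x,y}e_0,T_{x',y'}e_0)=\langle T_{x,y}e_0,T_{x',y'}e_0\rangle$ reduces to an algebraic simplification. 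A cleaner route I would prefer is to exploit $S$-invariance of the hyperbolic distance: the group law yields $d(T_{x,y}e_0,T_{x',y'}e_0)=d(e_0,T_{(x'-x)/y,\,y'/y}e_0)$, so one only needs $\langle e_0,T_{u,v}e_0\rangle$ for a single $(u,v)$, an essentially one-dimensional computation that one then substitutes back.

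For the Laplace--Beltrami formula the distance formula determines the Riemannian metric: expanding $2(\cosh d-1)$ to second order around $(x,y)$ gives the conformally Euclidean $ds^2=(dy^2+\|dx\|^2)/y^2$. I would then apply the coordinate formula $\Delta=|g|^{-1/2}\partial_i(|g|^{1/2}g^{ij}\partial_j)$ with $|g|^{1/2}=y^{-q}$ and $g^{ij}=y^2\delta^{ij}$; the only nontrivial term is the $y$-derivative acting on $y^{2-q}$, which produces the advertised $(2-q)y\,\partial_y$ drift, while the flat pieces contribute the second derivatives.

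The main source of error is the matrix bookkeeping in the middle step: tracking the $q+1$ components of $T_{x,y}e_0$ and the cross terms in $\langle\cdot,\cdot\rangle$. The invariance shortcut above is the only genuinely non-routine idea; once $\langle e_0,T_{u,v}e_0\rangle$ has been evaluated, every remaining assertion of the proposition follows by a direct substitution and differentiation.
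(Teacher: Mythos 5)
Your approach is necessarily different from the paper's, which gives no proof of Proposition~\ref{dist} beyond a citation to Franchi and Le Jan; any free-standing argument is therefore new, and yours is a sensible one. The diffeomorphism part via simple transitivity of $S$ on $\HH_q$ is fine (one can also invert $\tilde T$ explicitly: from the closed form of $T_{x,y}e_0$ one reads off $1/y=\xi_0-\xi_1$ and $x_j=y\,\xi_{j+1}$). For the distance formula the $S$-invariance shortcut is the right trick, and the nilpotency computation does close it: one finds $T_{u,v}e_0=\bigl(c+\tfrac{\|u\|^2}{2v}\bigr)e_0+\bigl(s+\tfrac{\|u\|^2}{2v}\bigr)e_1+v^{-1}\sum_j u_je_{j+1}$ with $c=\cosh\log v$, $s=\sinh\log v$, hence $\langle e_0,T_{u,v}e_0\rangle=(v^2+1+\|u\|^2)/(2v)$, which gives the claimed formula after substituting $u=(x'-x)/y$, $v=y'/y$.

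The Laplacian part, however, needs a correction that you have glossed over. Carrying out your Voss--Weyl computation honestly, with $g^{ij}=y^2\delta^{ij}$ and $\sqrt{|g|}=y^{-q}$, gives
$$y^q\,\partial_i\bigl(y^{2-q}\delta^{ij}\partial_j\bigr)=y^2\frac{\partial^2}{\partial y^2}+(2-q)\,y\frac{\partial}{\partial y}+y^2\Delta_{q-1}^x,$$
with a factor $y^2$ multiplying $\Delta_{q-1}^x$ that is absent from the statement as printed. That factor is genuinely required: the proof of Proposition~\ref{prop_horo} uses precisely this generator, and the asserted form $X_t=\int_0^t e^{B_s^{-\rho}}\,dW_s^{(q-1)}$ forces the $x$-part of the generator to be $\tfrac12 y^2\Delta_{q-1}^x$; without the $y^2$ one would get $X_t=W_t^{(q-1)}$ instead. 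So the displayed operator in the proposition contains a typographical slip, and your phrase ``the flat pieces contribute the second derivatives'' silently endorses it rather than flagging it. You should write out the $y^2$ explicitly, note that your computation disagrees with the printed formula, and state that the pull-back should read $y^2\frac{\partial^2}{\partial y^2}+(2-q)y\frac{\partial}{\partial y}+y^2\Delta_{q-1}^x$.
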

 In horocyclic coordinates on $\HH_{q}$, the hyperbolic Brownian motion $\xi_t$ has a nice probabilistic representation which comes from the fact that it can be seen as a Brownian motion on the group $S$ where 
 \begin{definition}On a Lie group a process is called a Brownian motion if it is a continuous process with independent stationary multiplicative increments.
 \end{definition}
 Indeed, let $(X_t,Y_t)\in \R^{q-1} \times \R_+^*$ be the horocyclic coordinates of $\xi_t$, and let 
 $$\varrho=\frac{q-1}{2},$$
 then (see \cite{Bougerol}, or \cite[Theorem 7.6.5.1]{Franchi_LeJan}):   \begin{proposition}\label{prop_horo}
One can write $$ X_t=\int_0^t e^{B_s^{-\rho}} \, dW_s^{(q-1)}, \, Y_t=e^{B_t^{-\rho}},$$
  where $B_t^{-\rho}=B_t-t \varrho$ is a real Brownian motion on $\R$ with drift $-\varrho$ and $W_s^{(q-1)}$ is a $q-1$-dimensional standard Brownian motion, independent of $B^{-\varrho}$.
    \end{proposition}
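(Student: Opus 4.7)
The plan is to define the right-hand side processes $(X_t,Y_t)$ from the statement and verify, via Itô's formula, that they form a diffusion whose infinitesimal generator agrees with $\tfrac12$ times the pullback of the Laplace--Beltrami operator described in Proposition~\ref{dist}. Since that operator is elliptic with smooth coefficients, the associated martingale problem is well-posed, and the identification of the generator together with the correct starting point $(X_0,Y_0)=(0,1)$ (which $\tilde T$ sends to $T_{0,1}e_0=o$) will give equality in law with the horocyclic coordinates $(X_t,Y_t)$ of $\xi_t$.

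First I would handle the $Y$-coordinate. Since $Y_t=\exp(B_t-t\varrho)$, Itô's formula gives
$$dY_t = Y_t\,dB_t + \bigl(\tfrac12-\varrho\bigr)Y_t\,dt = Y_t\,dB_t + \tfrac{2-q}{2}Y_t\,dt,$$
using $\varrho=(q-1)/2$, and $d\langle Y\rangle_t = Y_t^2\,dt$. For a test function $f(x,y)$, this contributes
$\tfrac12 Y_t^2\,\partial_{yy}f + \tfrac{2-q}{2}Y_t\,\partial_y f$ to the drift of $f(X_t,Y_t)$, i.e.\ precisely the $y$-part of the operator in Proposition~\ref{dist}.

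Next I would handle $X_t=\int_0^t Y_s\,dW_s^{(q-1)}$. Since $W^{(q-1)}$ is a standard Brownian motion in $\R^{q-1}$ independent of $B$, we have $d\langle X^i,X^j\rangle_t=\delta_{ij}Y_t^2\,dt$ and $d\langle X^i,Y\rangle_t=0$, so the contribution of the $X$-coordinate to the drift of $f(X_t,Y_t)$ is $\tfrac12 Y_t^2\,\Delta_{q-1}^x f$. Added to the $Y$-contribution, this reconstructs the full generator of Proposition~\ref{dist}, completing the proof. The underlying reason is structural rather than computational: $\tilde T$ turns $\HH_q$ into the solvable Lie group $S=\R^{q-1}\rtimes\R_+^*$, the hyperbolic Brownian motion is the left-invariant Brownian motion on $S$, and the multiplication law $T_{x,y}T_{x',y'}=T_{x+yx',yy'}$ forces the $\R_+^*$-part to be the autonomous geometric Brownian motion $Y_t$ while the $\R^{q-1}$-part is obtained by integrating increments dilated by $Y_s$—exactly the stochastic integral $\int_0^t Y_s\,dW_s^{(q-1)}$. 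There is no real obstacle; the only care is the bookkeeping of the drift $-\varrho$, which is fixed by requiring the correct coefficient $(2-q)/2$ in front of $y\,\partial_y$.
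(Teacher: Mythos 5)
Your proof is correct and is the same argument the paper gives (the paper's entire proof is the sentence ``This follows immediately from Ito's formula and the expression of the generator given in Proposition~\ref{dist}''); you have simply expanded it, including the appeal to well-posedness of the martingale problem and the check that the starting point $T_{0,1}e_0=o$ is correct. One small remark: the $x$-contribution you obtain is $\tfrac12\,Y_t^2\,\Delta_{q-1}^x f$, i.e.\ the pullback operator should read $y^2\partial_{yy}+(2-q)y\partial_y+y^2\Delta_{q-1}^x$, which is what the distance formula $\cosh d(\tilde T(x,y),\tilde T(x',y'))=(\|x-x'\|^2+y^2+y'^2)/(2yy')$ forces (it gives $ds^2=(\|dx\|^2+dy^2)/y^2$); the displayed formula in Proposition~\ref{dist} omits the $y^2$ in front of $\Delta_{q-1}^x$, so your computation in fact matches the correct operator rather than the one printed there.
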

  \proof This follows immediately from Ito's formula and the expression  of the generator given in Proposition  \ref{dist}.

  \subsection{Ground state processes}
  
  \subsubsection{Ground state processes on a manifold}
  We will need the notion of ground state process. In order to introduce it rapidly
  we use the set up presented in Pinsky \cite{Pinsky} (see also Pinchover \cite{Pinchover}). On a manifold $M$ we consider an elliptic operator which can be written locally as
$$D=\sum_{i,j}a_{ij}(x)\frac{\partial^2}{\partial x_i \partial x_j}+\sum_{i}b_{i}(x)\frac{\partial}{\partial x_i }+V(x),$$
where the coefficients are  $C^{\infty}$ and the matrix $a$ is symmetric positive definite (i.e. hypothesis $H_{loc}$  in \cite[p.~124]{Pinsky}).

Let 
$$C_D=\{u \in C^{\infty}(M); Du(x) =0, u(x) > 0, \mbox { for all } x \in M\}.$$
There exists $\lambda_0 \in (-\infty,+\infty]$ such that, for any $\lambda < \lambda_0$, $C_{D-\lambda}$ is empty,  and for $\lambda > \lambda_0$, $C_{D-\lambda}$ is not empty (\cite[Theorem 4.3.2]{Pinsky}). For a self adjoint operator $-\lambda_0$  coincide with the bottom of the spectrum on $L^2$ under general conditions (\cite[Proposition 4.10.1]{Pinsky}), for instance for the Brownian motion on a Riemannian manifold.
\begin{definition}
When $\lambda_0 < +\infty$, $\lambda_0$   is  the generalized principal eigenvalue of $D$.
\end{definition}
A positive  function $h \in C^{\infty}(M)$ such that $Dh=\lambda_0h$ is called a ground state (it does not always exist and is  in general not unique). The Doob $h$-transform of $D$ is the operator $D^h$ defined by
$$D^hf =\frac{1}{h}D(hf)-\lambda_0 f.$$
 The associated Markov process is called the $h$--ground state process. When $a$ is the identity matrix, then
\begin{equation}\label{D_h}D^h=D+2\sum_{i}\frac{\partial \log(h)}{\partial x_i }\frac{\partial}{\partial x_i }, \end{equation}
(see \cite[Section 4.1]{Pinsky}).
For the Brownian motion on a Riemannian manifold, when $V=0$ and once an origin is fixed, there is sometimes a canonical choice of a ground state for which the ground state process has a  probabilistic interpretation as an infinite Brownian loop. This infinite Brownian loop is, loosely speaking, the limit of the first half of the Brownian bridge around the origin when its length goes to infinity  (see \cite{Anker} for details).

  \subsubsection{A radial ground state process on $\HH_q$}
  
    In Proposition \ref{prop_horo} the component $Y_t$ of the Brownian motion on $\HH_q$ depends on the dimension $q$ only through the drift $\varrho=(q-1)/2$. In order to see what happens when $q \to +\infty$, it is natural  to first kill this drift. This is why we will consider the Brownian motion at the bottom of its spectrum.
  
The generalized principal eigenvalue of the Laplace Beltrami operator $\Delta$ is $-\varrho^2$
  and there exists a unique radial function $\varphi_0$ on $\HH_q$, called the basic Harish Chandra function,  such that 
  $$\Delta \varphi_0= -\varrho^2 \varphi_0$$ and 
  $\varphi_0(o)=1$ (see, e.g., Davies \cite[5.7.1]{Davies}, Gangolli and Varadarajan, \cite{Gangolli_Varadarajan}, Helgason \cite{Helgason}). 
 \begin{definition}
The infinite Brownian loop on $\HH_{q}$ is the $\varphi_0$--ground state process  $(\xi_t^0)$ with generator $$\frac{1}{2}\Delta^{\varphi_0} f = \frac{1}{2\varphi_0}\Delta (\varphi_0 f) +\frac{\varrho^2}{2}f.$$
  \end{definition}
  The function $\varphi_0$ is radial, which means that $\varphi_0(\xi)$ is  a function of $r=d(o,\xi)$; we define $\tilde \varphi_0:\R\to \R$ by, $$\tilde \varphi_0(r)=\varphi_0(\xi).$$
  It follows from (\ref{DeltaR}) and (\ref{D_h}) that,
  \begin{proposition}\label{prop_gen_bas}
   The process $\{d(o,\xi^0_t), t \geqÊ0\}$ is a Markov process on $\R^+$ with generator $\Delta^{\tilde \varphi_0}_R/2$ where
    $$\Delta^{\tilde \varphi_0}_R= \frac{d^2}{ dr^2}+(
(q-1)\coth r+2\frac{\tilde \varphi_0'(r)}{\tilde \varphi_0(r)}) \,  \frac{d}{ {dr}}.$$
    \end{proposition}
  
   \subsubsection{A non radial ground state process on $\HH_q$}
 
  Although our main interest is in $\xi_t^0$  we will actually need another ground state process for which computations are easier.  It is not invariant under rotations but its radial part is the same as the one of $\xi_t^0$ (see Proposition \ref{same_law} below).
Let $\Psi_0:\HH_q\to \R$ be the function defined by, if $(x,y)\in  \R^{q-1}\times \R_+^* $ are the horocyclic coordinates of $\xi \in \HH_q$,
  \begin{equation}\label{Psi_0}\Psi_0(\xi)=y^{(q-1)/2}=e^{\varrho \log y}. \end{equation}
 Notice that $\Psi_0(o)=1$ and
  $$\Delta \Psi_0 (\xi) =y^2\frac{\partial^2}{\partial y^2}y^\varrho-(q-2)y\frac{\partial}{\partial y} y^\varrho=-\varrho^2 \Psi_0(\xi).$$
 Thus  $
  \Psi_0$ is, like $\varphi_0$, a positive ground state of $\Delta$.  
 Let $\{S_t, tÊ\geqÊ0\}$ be the $\Psi_0$-ground state process of the Brownian motion $(\xi_t)$ on $\HH_q$. By definition, for all $T>0$, when $f:C([0,T],\HH_q)\to \R^+$ is measurable, 
  \begin{equation}\label{Psi0}\E(f(S_t, 0 \leqÊt \leqÊT))=e^{\varrho^2T/2}\E(f(\xi_t, 0 \leq t \leqÊT)\frac{\Psi_0(\xi_T)}{\Psi_0(\xi_0)})\end{equation}
where $\Psi_0(\xi_0)=1$ since $\xi_0=o$. 
  Then, it is easy to see that:
  \begin{lemma}[\cite{Bougerol}]
  In horospherical coordinates,    
  $$S_t=(\int_0^t e^{B_s} \, dW_s^{(q-1)}, e^{B_t}),$$
  where $B_t$ is a  standard (i.e. driftless) one dimensional Brownian motion, and $W^{(q-1)}$ is a standard $q-1$ dimensional Brownian motion, independent of $B$.
  \end{lemma}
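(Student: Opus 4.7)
The plan is to identify the Radon--Nikodym density $e^{\varrho^2 T/2}\Psi_0(\xi_T)$ appearing in the definition (\ref{Psi0}) of the ground state process as a Girsanov exponential that precisely removes the drift $-\varrho$ from $B^{-\varrho}$ in the horocyclic representation of Proposition \ref{prop_horo}.

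First I would combine $\Psi_0(\xi)=y^{\varrho}$ with the identity $Y_T=e^{B_T^{-\varrho}}=e^{B_T-\varrho T}$ from Proposition \ref{prop_horo} to rewrite
$$e^{\varrho^2 T/2}\,\Psi_0(\xi_T)\;=\;e^{\varrho^2 T/2}\,e^{\varrho B_T-\varrho^2 T}\;=\;e^{\varrho B_T-\varrho^2 T/2},$$
which is the exponential martingale associated with the standard one-dimensional Brownian motion $B$.

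Next I would invoke Girsanov's theorem: under the probability $Q$ whose density on $\F_T$ is $e^{\varrho B_T-\varrho^2 T/2}$, the process $\hat B_t:=B_t-\varrho t=B_t^{-\varrho}$ is a standard Brownian motion. Because this density is a function of $B$ alone and, under the reference measure, $W^{(q-1)}$ is independent of $B$, it follows that under $Q$ the process $W^{(q-1)}$ is still a standard $(q-1)$-dimensional Brownian motion, still independent of $\hat B$. Substituting into the representation of Proposition \ref{prop_horo}, the horocyclic coordinates of $\xi_t$ become, under $Q$,
$$X_t\;=\;\int_0^t e^{\hat B_s}\,dW_s^{(q-1)},\qquad Y_t\;=\;e^{\hat B_t}.$$

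By (\ref{Psi0}), the law of $(S_t)_{0\leq t\leq T}$ equals the law of $(\xi_t)_{0\leq t\leq T}$ under $Q$, so renaming $\hat B$ by $B$ yields the stated formula. The one subtle point is the assertion that $W^{(q-1)}$ is unaffected by the change of measure; this is precisely where the independence of $B$ and $W^{(q-1)}$ under the reference measure is used. As a sanity check one could alternatively compute the generator $\tfrac12 D^{\Psi_0}$ directly from the Doob formula $D^{\Psi_0}f=\Psi_0^{-1}\Delta(\Psi_0 f)+\varrho^2 f$ applied to the pullback in Proposition \ref{dist}, obtaining $\tfrac12(y^2\partial_y^2+y\partial_y+y^2\Delta_{q-1}^x)$, and verify by Ito's formula that the pair $dX_t=Y_t\,dW_t^{(q-1)}$, $dY_t=Y_t\,dB_t+\tfrac12 Y_t\,dt$ solves the corresponding martingale problem.
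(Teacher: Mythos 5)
Your proof is correct and uses the Girsanov change-of-measure argument, which is precisely the standard route for this statement; the paper itself offers no explicit proof (it cites \cite{Bougerol} and says "it is easy to see"), and yours is the argument that is being alluded to. The key point — that $e^{\varrho^2 T/2}\Psi_0(\xi_T)=e^{\varrho B_T-\varrho^2 T/2}$ is exactly the exponential martingale that removes the drift $-\varrho$, while the density being $\sigma(B)$-measurable ensures that $W^{(q-1)}$ retains both its law and its independence under the new measure — is exactly the content being invoked, and you have made it explicit and correct.
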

Let $dk$ be the Haar measure on $K$, normalized as a probability measure. The function 
  $$\int_K \Psi_0(k\cdot \xi)\, dk$$
  is a positive radial eigenvector of $\Delta$ with eigenvalue $-\varrho^2$. Therefore, by uniqueness, we have the well known formula of Harish Chandra (see Helgason, \cite{Helgason})
  $$\varphi_0(\xi)=\int_K \Psi_0(k\cdot \xi)\, dk.$$
 The processes $S_t$ and $\xi_t^0$ do not have the same law, and $S_t$ is not rotation invariant. However,
  \begin{proposition}\label{same_law}
The two processes $\{d(o, S_t),t \geq 0\}$ and $\{d(o, \xi_t^0),t \geq0\}$
have the same law.  \end{proposition}
  \begin{proof}
    By invariance under rotation of the Brownian motion on $\HH_q$, for any $k\in K$, the processes $(\xi_t)$ and $(k\cdot \xi_t)$ have the same law.
  Hence, after integration over $K$, for $T>0$ and any measurable function $f:C([0,T],\R^+)\to \R^+,$ 
  \allowdisplaybreaks[4]
\hfill{}\begin{align*}
   \E(f(d(o,S_t), &0 \leqÊt \leqÊT))= e^{\varrho^2T/2}\E(f(d(o,\xi_t), 0 \leqÊt \leqÊT)\Psi_0(\xi_T))\\
         &= e^{\varrho^2T/2}\E(f(d(o,k\cdot \xi_t), 0 \leq t \leqÊT)\Psi_0(k\cdot \xi_T))\\
&= e^{\varrho^2T/2}\E(f(d(o,\xi_t), 0 \leq t \leqÊT)\Psi_0(k\cdot \xi_T))\\
      &= e^{\varrho^2T/2}\E(f(d(o,\xi_t), 0 \leq t \leqÊT)\int_K \Psi_0(k\cdot \xi_T)\, dk)\\
   &=e^{\varrho^2T/2}\E(f(d(o,\xi_t), 0 \leq t \leqÊT)\varphi_0(\xi_T))\\
   &=  \E(f(d(o,\xi_t^0), 0 \leqÊt \leqÊT)).   \end{align*}    \end{proof}
\begin{remark}\label{RemBusemann} In horocyclic coordinates in (\ref{Psi_0}) the function $(x,y)\mapsto\log y$ is the Busemann function on $\HH_q$ associated with the point at infinity $y=+\infty$ and $\Psi_0$ is a minimal eigenfunction. The process $(S_t)$ can be interpreted as the Brownian motion on $\HH_q$ conditioned to have 0 speed (i.e. $d(o,S_t)/t \to 0$ as $t \to +\infty$) and to exit at $y=+\infty$ (see \cite{Guivarch}). \end{remark}
\subsection{Matsumoto\,--Yor process as a limit}

 Our main result is the following.
  
  \begin{theorem}\label{theohyper}
  As $q\to +\infty$, the process $$d(0,  \xi_t^0)-\log q, t >0,$$converges in distribution to $\log \eta_t, t >0,$ where 
$$\eta_t=\int_0^{t} e^{2 B_s-B_t}\, ds$$   
is the Matsumoto\,--Yor  process, which is therefore a Markov process.
  \end{theorem}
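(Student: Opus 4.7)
The plan is to replace $\xi_t^0$ by the non-radial ground state process $(S_t)$, which has the same radial law by Proposition \ref{same_law}, and to exploit its explicit horocyclic description. With $S_t=\tilde T(X_t,Y_t)$, $X_t=\int_0^t e^{B_s}\,dW_s^{(q-1)}$, and $Y_t=e^{B_t}$, noting that $o=\tilde T(0,1)$, the distance formula in Proposition \ref{dist} gives
$$
\cosh d(o,S_t)=\frac{\|X_t\|^2+1+e^{2B_t}}{2\,e^{B_t}}.
$$

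Next I would invoke a conditional law of large numbers. Conditionally on $B$, the $q-1$ coordinates of $X_t$ are i.i.d.\ centered Gaussians of common variance $A_t:=\int_0^t e^{2B_s}\,ds$, so $\|X_t\|^2/A_t$ is $\chi^2$ distributed with $q-1$ degrees of freedom and conditional variance $2(q-1)A_t^2$. In particular,
$$
\frac{\|X_t\|^2}{q}\longrightarrow A_t\quad\text{in probability as }q\to\infty,
$$
jointly at any finite family of positive times.

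Since $d(o,S_t)$ is of order $\log q$, the elementary estimate $\cosh r=\tfrac{1}{2}e^r(1+o(1))$ as $r\to\infty$ yields
$$
d(o,S_t)-\log q=\log\!\left(\frac{\|X_t\|^2+1+e^{2B_t}}{q}\right)-B_t+o(1),
$$
so the continuous mapping theorem, together with $(1+e^{2B_t})/q\to 0$, gives
$$
d(o,S_t)-\log q\ \longrightarrow\ \log A_t-B_t=\log\!\int_0^t e^{2B_s-B_t}\,ds=\log\eta_t
$$
in distribution, at any finite collection of positive times.

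The main obstacle is ensuring that the fluctuations of $\|X_t\|^2/q$ around $A_t$ remain negligible when passing under the logarithm, which is delicate near $t=0$ where $A_t$ may be small. I would handle this by restricting to $t$ in a compact interval $[\delta,T]\subset(0,\infty)$ and using chi-squared concentration to control the relative error uniformly; the Markovianity of $\log\eta_t$ then follows from the Markov property of $d(o,\xi_t^0)$ at each finite $q$ (Proposition \ref{prop_gen_bas}) combined with the convergence in distribution.
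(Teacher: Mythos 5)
Your proposal follows essentially the same route as the paper: replace $\xi_t^0$ by the distinguished process $S_t$ via Proposition \ref{same_law}, write $\cosh d(o,S_t)$ explicitly from Proposition \ref{dist}, and apply a law of large numbers conditionally on $B$ to the $q-1$ squared stochastic integrals. The only differences are cosmetic: the paper invokes the conditional \emph{strong} law of large numbers directly (the summands $(\int_0^t e^{B_s}\,d\beta_s^{(k)})^2$, $k\geq1$, are i.i.d.\ given $\sigma(B)$ with conditional mean $\int_0^t e^{2B_s}\,ds$), giving almost sure convergence at each fixed $t>0$ rather than convergence in probability via chi-squared concentration; and the paper is terse where you are explicit about passing from pointwise convergence to distributional convergence of the processes. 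Your worry about $t$ near $0$ is not really needed at fixed positive times, since $\int_0^t e^{2B_s}\,ds>0$ a.s.\ for every $t>0$, but it is a reasonable concern when upgrading to process-level statements, a point the paper glosses over with ``the limit is Markov as a limit of Markov processes.''
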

  \begin{proof} By Proposition \ref{same_law}, it is enough to show that, almost surely,
 \begin{equation}\label{convd}\lim_{q \to \infty}Êd(o,S_t)-\log q = \log\int_0^{t} e^{2 B_s-B_t}\, ds.\end{equation}
 The origin $o$ in $\HH_q$ is $e_0=\tilde T(0,1)$. By  Proposition  \ref{dist},   $$\cosh d(o,\tilde T(x,y))=\frac{\|x\|^2+y^2+1}{2y},$$
 and
 $$ \cosh d(0,S_t)=\frac{\| \int_0^t e^{B_s} \, dW_s^{(q-1)}\|^2+e^{2B_t}+1}{2e^{B_t}},$$
thus
 $$ \frac{2\cosh d(0,S_t)}{q}=\frac{e^{B_t}+e^{-B_t}}{q}+e^{-B_t}\frac{1}{q}\sum_{k=1}^{q-1}(\int_0^t e^{B_s} \, d\beta_s^{(k)})^2,$$
  where $\beta_s^{(k)}, k\geq 1,$ are independent standard Brownian motions. Conditionally on the $\sigma$-algebra $\sigma(B_r, r \geqÊ0)$, the random variables $\int_0^t e^{B_s} \, d\beta_s^{(k)}, k\geq 1, $ are independent with the same distribution and 
 $$\E((\int_0^t e^{B_s} \, d\beta_s^{(k)})^2| \sigma(B_r, r\geqÊ0))=
\int_0^t e^{2B_s} \, ds.$$
 Therefore, by the law of large numbers, a.s.
 $$\lim_{q \to +\infty} \frac{1}{q}e^{d(0,S_t)}=\lim_{q \to +\infty} \frac{2}{q} \cosh d(0,S_t)=e^{-B_t}\int_0^t e^{2B_s} \, ds.$$
The limit is Markov as a limit of Markov processes. \end{proof}

Let us recover the generator of the Matsumoto\,--Yor  process. Let  $$
\delta(r) = \sinh^{q-1} r,
$$ then, by Proposition  \ref{prop_gen_bas},
$$\Delta^{\tilde \varphi_0}_R= \frac{d^2}{ dr^2}+ 2 \frac{d}{dr}\log (\delta^{1/2}\tilde  \varphi_0)(r) \frac{d}{ {dr}},$$
hence the generator of
 $d(o,\xi_t^0)-\log q$
 is
 \begin{equation}\label{gener}    \frac{1}{2}\frac{d^2}{ dr^2}+ \frac{d}{dr}\log (\delta^{1/2}\tilde \varphi_0)(r+\log q) \frac{d}{ {dr}}.
\end{equation} 
 Therefore the next proposition follows from Corollary \ref{cor_append} (since there, $m_\alpha=q-1$ for $SO(1,q)$).

\begin{proposition}[\cite{Matsumoto_Yor_1}]
The generator of the Matsumoto\,--Yor  process is   $$\frac{1}{2}\frac{d^2}{dr^2}+(\frac{d}{dr} \log K_0(e^{-r}))\frac{d}{dr}.$$
\end{proposition}
\begin{remark} We will clarify in Section \ref{section_higher} the occurrence of the ground state $K_0$ of the Toda operator $\frac{d^2}{ d{r^2}}-e^{-2r}$.
\end{remark}

  \subsection{A conditional law}
   
    The intertwining property which occurs in the proof of Matsumoto\,--Yor  theorem can also be establish by our approach.
\begin{proposition}[\cite{Matsumoto_Yor_1}]\label{condit}
    When $\eta_t= e^{-B_t}\int_0^{t} e^{2 B_s}\, ds$, and $\lambda \in \R$,
         $$\E(e^{\lambda B_t}| \sigma(\eta_s, 0 \leq s \leq t))=\frac{K_\lambda(1/\eta_t)}{K_0(1/\eta_t)}.$$    \end{proposition}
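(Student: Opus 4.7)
The plan is to mirror the strategy of Theorem \ref{theohyper}: establish a finite-$q$ intertwining identity for the ground state process $(S_t)$ on $\HH_q$, then pass to the limit $q \to +\infty$ using the spherical-function asymptotics of the appendix. The needed ingredients --- rotation invariance of hyperbolic Brownian motion, the Radon--Nikodym identity (\ref{Psi0}), and the almost-sure limit $d(o,S_t) - \log q \to \log \eta_t$ --- are already in place.

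The finite-$q$ identity I aim for reads, with $R_s = d(o, S_s)$,
$$\E\bigl(e^{\lambda B_t}\,\big|\,\sigma(R_s, 0 \le s \le t)\bigr) = \frac{\tilde\varphi_\lambda(R_t)}{\tilde\varphi_0(R_t)},$$
where $\tilde\varphi_\lambda(r) = \int_K y(k\cdot \xi)^{\lambda+\varrho}\,dk$ for any $\xi$ with $d(o,\xi)=r$; this is radial (by $K$-invariance of the distance) and extends the Harish--Chandra formula for $\tilde\varphi_0$. To prove it, apply (\ref{Psi0}) together with $\Psi_0(\xi)=y(\xi)^\varrho$ and $y(S_t)=e^{B_t}$: for a bounded path functional $G$,
$$\E\bigl(e^{\lambda B_t}\,G((R_s)_{s\le t})\bigr) = e^{\varrho^2 t/2}\,\E\bigl(y(\xi_t)^{\lambda+\varrho}\,G((d(o,\xi_s))_{s\le t})\bigr).$$
Since $K$ acts by isometries fixing $o$ and preserves the law of $(\xi_s)_{s\le t}$ as well as each $d(o,\xi_s)$, the factor $y(\xi_t)^{\lambda+\varrho}$ can be averaged over Haar measure on $K$ without affecting the radial path, producing $\tilde\varphi_\lambda(d(o,\xi_t))$. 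Running the same computation with $\lambda = 0$ realizes the law of $R_\cdot$ as the $\tilde\varphi_0$-weighted radial law of $\xi$; combining the two equalities yields the intertwining and, as a byproduct, shows that the conditional expectation depends on the past only through $R_t$.

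The second step is $q\to\infty$. Theorem \ref{theohyper} gives $R_s - \log q \to \log \eta_s$ almost surely at the path level, so $\sigma(R_s, s \le t)$ converges to $\sigma(\eta_s, s \le t)$, while the left side $e^{\lambda B_t}$ does not depend on $q$. The appendix provides (in the spirit of Corollary \ref{cor_append}) the asymptotic $\tilde\varphi_\lambda(r + \log q) \sim c(q)\,K_\lambda(e^{-r})$ on compacts, with a prefactor $c(q)$ independent of $\lambda$, so the ratio $\tilde\varphi_\lambda(R_t)/\tilde\varphi_0(R_t)$ converges to $K_\lambda(1/\eta_t)/K_0(1/\eta_t)$.

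The main obstacle is justifying the exchange of limit and conditional expectation. One must upgrade the almost-sure convergence of the ratios to a uniform integrability statement: the integral representation of $\tilde\varphi_\lambda$ combined with standard Gaussian tail bounds on $B_t$ should yield bounds on $\tilde\varphi_\lambda(R_t)/\tilde\varphi_0(R_t)$ that are uniform in $q$ on the event $\{\log\eta_t \in \text{compact}\}$, which has probability tending to one. Once this is secured, passing to the limit in the finite-$q$ intertwining delivers the proposition.
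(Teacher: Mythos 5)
Your proposal is correct and follows essentially the same route as the paper: establish the finite-$q$ intertwining $\E\bigl(e^{\lambda B_t} f(R)\bigr) = \E\bigl(\frac{\tilde\varphi_\lambda(R_t)}{\tilde\varphi_0(R_t)} f(R)\bigr)$ via (\ref{Psi0}), the identity $y(S_t)=e^{B_t}$, and $K$-averaging of $y(\xi_t)^{\lambda+\varrho}$, then let $q\to+\infty$ using Theorem \ref{theohyper} and Corollary \ref{cor_append}. The one place where you overcomplicate: rather than pursuing Gaussian tail bounds, the paper simply tests the finite-$q$ identity against continuous $f$ with \emph{bounded support} on path space, which confines $d(o,S_t)-\log q$ to a compact set where the normalized ratio converges, so dominated convergence closes the argument; your appeal to ``$\sigma(R_s)$ converges to $\sigma(\eta_s)$'' is also unnecessary and a bit loose, since one never passes to the limit in a conditional expectation directly but only in the unconditional equality of expectations.
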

       \begin{proof}    For $\xi\in \HH_q$ with horocyclic coordinates $(x,y)\in  \R^{q-1}\times \R_+^* $ and $\lambda \in \R$, let
$\lambda(\xi)=\lambda \log y$.
 We consider the Harish Chandra function $\varphi_\lambda$ (see \cite[Theorem IV.4.3]{Helgason}) given by
$$\varphi_\lambda(\xi)=\int_K e^{(\lambda+ \rho)(k \cdot\xi)}dk,$$
(it of course depends on $q$) and we write $\tilde \varphi_\lambda(r)=\varphi_\lambda(\xi)$ when $r=d(o,\xi)$.
Since $\lambda(S_t)=\lambda B_t$ and $e^{\varrho(\xi)}=\Psi_0(\xi)$, it follows from (\ref{Psi0}) that, when $f:C([0,t]) \to \R^+$ is measurable,
\allowdisplaybreaks[4]
\begin{align*}
e^{-\varrho^2t/2}&\E(f(d(o, S_s), 0 \leq s \leq t))e^{\lambda B_t})\\
&=e^{-\varrho^2t/2}\E(f(d(o, S_s), 0 \leq s \leq t))e^{\lambda(S_t)})\\
&=\E(f(d(o,  \xi_s), 0 \leq s \leq t)e^{(\lambda+ \rho)( \xi_t)})\\
&=\E(f(d(o,  \xi_s), 0 \leq s \leq t)\int_K e^{(\lambda+ \rho)(k \cdot \xi_t)}dk)\\
&=\E(f(d(o,  \xi_s), 0 \leq s \leq t)\varphi_\lambda(\xi_t))\\
&=\E(f(d(o,  \xi_s), 0 \leq s \leq t)\frac{\varphi_\lambda(\xi_t)}{\varphi_0(\xi_t)}\varphi_0(\xi_t))\\
  &=\E(f(d(o,  \xi_s), 0 \leq s \leq t)\frac{\varphi_\lambda(\xi_t)}{\varphi_0(\xi_t)}e^{ \rho( \xi_t)})\\
  &=e^{-\varrho^2t/2} \E(f(d(o, S_s), 0 \leq s \leq t)\frac{\tilde \varphi_\lambda(d(o,S_t))}{\tilde \varphi_0(d(o,S_t))}  ).
\end{align*}
  We have again used the fact that for any $k\in K$, $\{\xi_t,t \geqÊ0\}$ has the same law as $\{k \cdot \xi_t,t \geqÊ0\}$. For each $q$,
\begin{align*}
  \E(f(d(0, S_s)-&\log q, 0 \leq s \leq t))e^{\lambda B_t})=\\ &\E(f(d(0, S_s)-\log q, 0 \leq s \leq t)\frac{\tilde\varphi_\lambda((d(o,S_t)-\log q) +\log q)}{\tilde\varphi_0((d(o,S_t)-\log q) +\log q)}  ).
  \end{align*}
  We choose for $f$ a continuous bounded function with bounded support. Letting $q \to +\infty$, it then follows from Theorem \ref{theohyper} and Corollary \ref{cor_append}  that
    $$\E(f(\log \eta_s, 0 \leq s \leq t)e^{\lambda B_t})=\E(f(\log \eta_s, 0 \leq s \leq t)\frac{K_{\lambda}(1/\eta_t)}{K_0(1/\eta_t)}).$$ \end{proof}
\begin{remark}\label{cameron} It is straightforward  to deduce from this proposition, Theorem \ref{theohyper} and Cameron and Martin's theorem  that, as proved by Matsumoto\,--Yor  \cite{Matsumoto_Yor_1},  when $B_t^{(\lambda)}=B_t+\lambda t$ is a Brownian motion with drift $\lambda$, then $\log \int_0^{t} e^{2 B_s^{(\lambda)}-B_t^{(\lambda)}}\, ds, t \geqÊ0,$
 is a Markov process on $\R$ with generator 
  $$\frac{1}{2}\frac{d^2}{dr^2}+(\frac{d}{dr} \log K_\lambda(e^{-r}))\frac{d}{dr}.$$
\end{remark}
 \section{Infinite series of symmetric spaces}\label{section_higher}
 
 We will now consider the same problem as above for the infinite series of symmetric spaces of non positive curvature, when the rank is fixed and the dimension goes to infinity.
  We will see that in the rank one case one finds the same result as for hyperbolic spaces, but that new phenomenons occur in higher rank.

 There are only three infinite series of (irreducible) Riemannian symmetric spaces of non positive (non zero) curvature namely
 the spaces $G/K$ where $$G=SO(p,q), SU(p,q) \mbox{ and } Sp(p,q),$$ and $K$ is a maximal compact subgroup (see \cite{Helgason}). We will suppose that $p \leqÊq$, then $p$ is the rank of the symmetric space.
 
  \subsection{$SU(p,q)$}
  We first consider the symmetric spaces associated with the series $SU(p,q)$ with the rank $p$ fixed, when $q\to +\infty$. 
 As in the case of hyperbolic spaces, it is convenient to use two descriptions of $SU(p,q)$.
  The usual description  is the following (see, e.g., \cite{Hoogenboom}, \cite{Knapp}): $SU(p,q)$ is the set of $(p+q)\times (p+q)$ matrices with entries in $\C$ and with determinant 1, which conserve the quadratic form
  $$\sum_{i=1}^p z_i\bar z_i- \sum_{j=1}^q z_{p+j}\bar z_{p+j} .$$ Let $I_p$ and $I_q$ be the identity matrix of order $p$ and $q$, and let $J=\begin{pmatrix} 
I_p &0  \\
0 & -I_q 
\end{pmatrix}$ then 
  $$SU(p,q)=\{M\in Sl(p+q,\C); M^*JM=J\}.$$
  If we write $M$ by block as
  $$M=\begin{pmatrix} 
M_1 &M_2  \\
M_3 & M_4 
\end{pmatrix},$$ where the size of the matrix  $M_1$ is $p\times p$, $M_2$ is $p\times q$, $M_3$ is $q\times p$ and $M_4$ is $q \times q$,
  then $M \in SU(p,q)$ when $M_1^*M_1-M_2^*M_2=I_p$, $M_4^*M_4-M_3^*M_3=I_q$, $M_1^*M_3=M_2^*M_4$, $\det(M)=1$.

  \subsection{Cartan decomposition and radial part}
Let $\bar K$ be the following maximal compact subgroup of $SU(p,q)$ (we put the notation $K$ aside for a later use) 
  $$\bar K=\{\begin{pmatrix} 
K_1 &0  \\
0 & K_4 
\end{pmatrix}\in Sl(p+q,\C), K_1 \in U(p), K_4 \in U(q)\}.$$
When $r=(r_1,\cdots,r_p)\in \R^p$, let
$$D_p(r)=\begin{pmatrix} 
\cosh r &       \sinh r      &  0  \\
\sinh r             & \cosh r   &0   \\
0             &              0   & I_{q-p} \\
\end{pmatrix},$$ 
where 
$$\cosh r =\begin{pmatrix} 
\cosh r_1 &              0 &\cdots  & 0             \\
0             & \cosh r_2  &\cdots  & 0              \\
0             & \cdots  &\cdots  & 0              \\
0             &              0  &\cdots & \cosh r_p  \\
\end{pmatrix},$$
and $\sinh r $ is the same matrix with $\cosh$ replaced by $\sinh$.
We consider the closed Weyl chamber
$$\mathfrak A^+=\{(r_1,\cdots,r_p)\in \R^p, r_1 \geqÊr_2 \geqÊ\cdots \geqÊr_p \geq 0\}.$$
The Cartan decomposition says that any $M$ in $SU(p,q)$ can be written as $M=k_1D_p(r) k_2$ with $k_1,k_2\in \bar K$ and $r\in \mathfrak A^+$. Such a $r$ is unique and is called the
 radial part of $M$. We let $r=Rad(M)$. Recall that,
 \begin{definition}\label{sing_val} Let $N$ be a $p \times p$ square matrix, the vector of singular values of $N$ is 
 $$\mbox{SingVal}(N)=(\sigma_1,\cdots,\sigma_p)\in \mathfrak A^+,$$ where the $\sigma_i$'s are the square roots of the eigenvalues of $NN^*$ written in decreasing order.
 \end{definition}
 \begin{lemma}\label{rad_expli} If $M=\begin{pmatrix} 
M_1 & M_2  \\
M_3 & M_4 
\end{pmatrix}\in SU(p,q)$, then $$\cosh \mbox{Rad}(M)=\mbox{SingVal}(M_1).$$ 
\end{lemma}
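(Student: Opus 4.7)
The plan is to read off $M_1$ directly from the Cartan decomposition $M = k_1 D_p(r) k_2$ given in the text, using the block-diagonal form of elements of $\bar K$.

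First I would recall that both $k_1$ and $k_2$ lie in $\bar K$, so they are block-diagonal of the form $\begin{pmatrix} K_1 & 0 \\ 0 & K_4 \end{pmatrix}$ with $K_1 \in U(p)$ and $K_4 \in U(q)$. To track the effect of $D_p(r)$, which is naturally written with three blocks of sizes $p$, $p$, $q-p$, I would further decompose $K_4$ (for each of $k_1$ and $k_2$) into $2 \times 2$ blocks of sizes $p$ and $q-p$, say $K_4 = \bigl(\begin{smallmatrix} A & B \\ C & D \end{smallmatrix}\bigr)$. Thus each $k_i$ becomes a $3 \times 3$ block matrix whose $(1,2)$, $(1,3)$, $(2,1)$, $(3,1)$ blocks vanish.

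Next I would compute the upper-left $p \times p$ block of $k_1 D_p(r) k_2$ in this $3 \times 3$ block form. Writing $k_2$ with upper-left block $K_1'$ and using the shape of $D_p(r)$, the first block column of $D_p(r) k_2$ has upper block $(\cosh r)\, K_1'$, middle block $(\sinh r)\, K_1'$, and zero bottom block. Multiplying on the left by $k_1$, whose first block row is $(K_1, 0, 0)$, yields
\[
M_1 = K_1 (\cosh r)\, K_1'.
\]

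Since $K_1$ and $K_1'$ are unitary, I would then compute
\[
M_1 M_1^* = K_1 (\cosh r)^2 K_1^*,
\]
so that the eigenvalues of $M_1 M_1^*$ are $\cosh^2 r_1, \ldots, \cosh^2 r_p$. Because $r \in \mathfrak{A}^+$ means $r_1 \geq \cdots \geq r_p \geq 0$, the $\cosh r_i$ are positive and already in decreasing order, hence $\mathrm{SingVal}(M_1) = \cosh \mathrm{Rad}(M)$, as asserted.

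I do not anticipate a serious obstacle: the whole argument is a block computation together with unitary invariance of singular values. The only mild point to keep straight is the bookkeeping between the $2 \times 2$ block structure defining $SU(p,q)$ (sizes $p$ and $q$) and the $3 \times 3$ block structure of $D_p(r)$ (sizes $p$, $p$, $q-p$), together with the observation that $r_i \geq 0$ is exactly what makes $(\cosh r_1, \ldots, \cosh r_p)$ a legitimate element of $\mathfrak{A}^+$ recording singular values.
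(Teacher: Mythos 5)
Your proof is correct and takes essentially the same approach as the paper: both extract the upper-left $p\times p$ block of the Cartan decomposition $M=k_1D_p(r)k_2$ to obtain $M_1 = K_1(\cosh r)\tilde K_1$ with $K_1,\tilde K_1\in U(p)$, then compute $M_1M_1^*=K_1(\cosh^2 r)K_1^*$ and use $r\in\mathfrak A^+$ to identify the ordered singular values. The extra $2\times2$ decomposition of the $U(q)$ factors that you introduce is harmless but unnecessary, since only the first block row of $k_1$ and the first block column of $k_2$ (both of which already vanish off the $p\times p$ corner) enter the computation.
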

\begin{proof} By the Cartan decomposition, there exists $K_1,\tilde K_1\in U(p)$ and $K_4, \tilde K_4\in U(q)$ such that, if $r$ is the radial part of $M$ then
$$M=\begin{pmatrix} 
M_1 &M_2  \\
M_3 & M_4 
\end{pmatrix}=\begin{pmatrix} 
K_1 &0  \\
0 & K_4
\end{pmatrix}\begin{pmatrix} 
\cosh r  & \sinh r & 0  \\
\sinh r & \cosh r & 0\\
0 & 0 & I_{q-p} 
\end{pmatrix}\begin{pmatrix} 
\tilde K_1 &0  \\
0 & \tilde K_4
\end{pmatrix}.$$
Therefore $M_1=K_1(\cosh r)\tilde K_1$ and $M_1M_1^*=K_1(\cosh^2 r) K_1^*$ which proves the lemma.
\end{proof}

\medskip

  \subsection{Iwasawa decomposition and horocyclic coordinates}
It will be convenient to write matrices in $SU(p,q)$ in another basis to make more tractable the solvable part of the Iwasawa decomposition (see, e.g., Lu \cite{Lu}, Iozzi and Morris \cite{Iozzi_Morris} or Sawyer \cite{Sawyer}). We still suppose that $q \geq p$. Let 
$$P=\begin{pmatrix} 
I_p/\sqrt{2} &0& I_p/\sqrt{2}\\
I_p/\sqrt{2} & 0& -I_p/\sqrt{2}\\
0 & I_{q-p} & 0
\end{pmatrix}.$$ 
Then $P^*=P^{-1}$ and
$P^{-1} J P=\bar J$ where
$$\bar J=\begin{pmatrix} 
0 &0& -I_p\\
0 & I_{q-p}& 0\\
-I_p & 0 & 0
\end{pmatrix}.$$
We introduce
$$G=\{P^{-1}MP, M \in SU(p,q)\},$$
which is the set of matrices $M$ in $Sl(p+q,\C)$ such that $M^*\bar J M=\bar J$.
The group $G$ is obviously isomorphic to $SU(p,q)$. So, we will work with $G$ instead of $SU(p,q)$.

Let $l,b,c$ be  $p\times p, p \times (q-p)$ and $p\times p$ complex matrices, respectively, where $l$ is {\bf lower} triangular and invertible,
and define \begin{equation}\label{S_(_)}S(l,b,c)=\begin{pmatrix} 
l &b & cl^{*-1}\\
0 & I_{q-p} & b^*l^{*-1}\\
0 & 0 & l^{*-1}
\end{pmatrix}.\end{equation}
Let 
\begin{align*}S&=\{S(l,b,c) ; l \mbox{ is lower triangular with positive diagonal}, c+c^*=bb^*\},\\
A&=\{S(l,0,0) \in S; l \mbox{ is diagonal with positive diagonal}\},\\
N&=\{S(l,b,c) \in S; \mbox{ the diagonal elements of } l  \mbox{  are equal to } 1\},\\
K&=P^{-1}\bar KP.
\end{align*}
Notice that in general $l$ is lower triangular and hence neither $N$ nor $S$ is made of upper triangular matrices. If $D=\{D_p(r), r \in \R^p\}$ then $A=P^{-1}DP$. The Iwasawa decomposition is  $G=N  A  K$,   $N$ is the nilpotent component and $S=NA$. Notice that $SU(p,q)/\bar K$ is isomorphic with $G/K$. For $M\in G$, $PMP^{-1}$ is in $SU(p,q)$ and we let $Rad(M)=Rad(PMP^{-1})$. In $G/K$, we choose as origin $o=K$, then $Rad(M)$ plays the role of a generalized distance between the cosets $o=K$ and $\xi=MK$ in $G/K$. \begin{lemma}[\cite{Lu}]\label{lemLu}
For $S(l,b,c) \in S,$
$$\cosh \mbox{Rad}(S(l,b,c))=\frac{1}{2}\mbox{SingVal}(l+l^{*-1}+cl^{*-1}). $$
\end{lemma}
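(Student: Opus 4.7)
The plan is to reduce the identity to Lemma \ref{rad_expli} by computing, explicitly, the top-left $p\times p$ block of the matrix $PS(l,b,c)P^{-1}\in SU(p,q)$. Indeed, by the very definition $\mathrm{Rad}(M)=\mathrm{Rad}(PMP^{-1})$ for $M\in G$, and Lemma \ref{rad_expli} then expresses $\cosh\mathrm{Rad}(S(l,b,c))$ as the vector of singular values of this $p\times p$ block. So the whole proof amounts to identifying that block.

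First I would check that $P$ is unitary, so $P^{-1}=P^*$, and match the block decompositions. The matrix $P$ has block columns of sizes $(p,q-p,p)$, which agrees with the block rows of $S(l,b,c)$ in the decomposition of (\ref{S_(_)}); and $P^*$ has block rows of sizes $(p,q-p,p)$ matching the block columns of $S(l,b,c)$. Thus the product $PS(l,b,c)P^*$ is a clean block multiplication, and the resulting block rows and columns are of sizes $(p,p,q-p)$ — so the top-left $p\times p$ block of $PS(l,b,c)P^{-1}$, which is what Lemma \ref{rad_expli} requires, is simply its $(1,1)$ block in this new decomposition.

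Second, I would carry out the block multiplication to see that this $(1,1)$ block is $\tfrac12(l+l^{*-1}+cl^{*-1})$. Only three terms contribute: the $(1,1)$ entry $l$ of $S(l,b,c)$ combined with the $(1,1)$ factors $I_p/\sqrt 2$ of $P$ and $P^*$ gives $l/2$; the $(3,3)$ entry $l^{*-1}$ combined with the $(1,3)$ factor of $P$ and the $(3,1)$ factor of $P^*$ gives $l^{*-1}/2$; and the $(1,3)$ entry $cl^{*-1}$ combined with the $(1,1)$ factor of $P$ and the $(3,1)$ factor of $P^*$ gives $cl^{*-1}/2$. The blocks $b$ and $b^*l^{*-1}$ only feed into the other block positions.

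Applying Lemma \ref{rad_expli} and the scaling property $\mathrm{SingVal}(\tfrac12 A)=\tfrac12\mathrm{SingVal}(A)$ then yields exactly the claimed formula. The only obstacle is the careful bookkeeping of the block sizes and the positions involved in the multiplication; there is no conceptual difficulty, and in particular the constraint $c+c^*=bb^*$ characterizing $S$ is not needed for this particular identity (it would be needed to verify that $PS(l,b,c)P^{-1}$ indeed lies in $SU(p,q)$, but this is already encoded in the membership $S(l,b,c)\in S\subset G$).
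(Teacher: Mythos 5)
Your proof is correct and follows exactly the same route as the paper: compute the top-left $p\times p$ block of $PS(l,b,c)P^{-1}$, find it to be $\tfrac12\bigl(l+(I+c)l^{*-1}\bigr)$, and invoke Lemma \ref{rad_expli}. The paper simply states the resulting block $M_1$ without the explicit bookkeeping, but the argument is identical.
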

\begin{proof} This follows immediately from Lemma \ref{rad_expli} since, for $S(l,b,c)\in G$, the corresponding element in ${SU}(p,q)$ can be written as
$$PS(l,b,c)P^{-1}=\begin{pmatrix} 
M_1 &M_2  \\
M_3 & M_4 
\end{pmatrix},$$ where $M_1=(l+(I+c)l^{*-1})/2$.\end{proof}
 By the Iwasawa decomposition each element $\xi \in G/K$ can be written uniquely as $\xi=S(l,b,c) K$ with $S(l,b,c)\in S$. We see that we can and will identify $S$ and $G/K$. 
We call $S(l,b,c)$ the horocyclic coordinates of $\xi$. They generalize the horocyclic--Poincar\'e coordinates in $\HH_q$. 

\subsection{Brownian motion on $G/K$ and infinite Brownian loop} The Lie algebra $\mathfrak S$ of $S$ is \begin{align*}\mathfrak S=\{\begin{pmatrix} 
l &b & c \\
0 & 0 & b^*\\
0 & 0 & -l^*
\end{pmatrix}, & \, \mbox{where } c \mbox{ is skew}-\mbox{Hermitian}, l \mbox{ is lower triangular with} \\  &  \mbox{ real  diagonal}\}.\end{align*}
The Killing form on the Lie algebra of $G$ allows to define a scalar product on $\mathfrak S$ by
 $$<X,Y>=\frac{1}{2}\mathrm{Trace}(XY^*).$$
Then $\mathfrak S$ is a real Euclidean space. 
 Let $\mathfrak{A}$ and $\mathfrak{N}$ be the Lie algebras of $A$ and $N$,  then
 $\mathfrak S=\mathfrak{ A\oplus N}$.  Let $X_1,\cdots,X_p$ be an orthonormal basis of $\mathfrak A$ and $N_1,\cdots, N_s $ be an orthonormal (real) basis of $\mathfrak N$ adapted to the root space decomposition. 
 In horospherical coordinates the Laplace Beltrami operator on $G/K= S$ is (e.g. \cite[proof of Proposition 2.2]{Bougerol}, \cite{Cowling}, \cite[p.105]{Guivarch})
 $$\Delta= \sum_{i=1}^p  X_i^2+2\sum_{j=1}^s N_j^2-2\sum_{i=1}^p \varrho(X_i)X_i,$$
where $\varrho$ is given below by (\ref{rho}) and $X_i$ and $N_j$ are considered as left invariant vector fields.
 We consider the Riemannian Brownian motion $\{\xi_t, t \geqÊ0\}$ on $G/K= S$. It is the process with generator $\Delta/2$ starting from the origin $o$. 
 
 As in the hyperbolic case one can consider the ground state process $\{\xi^0_t, t \geqÊ0\}$ of this Brownian motion associated with the basic Harish Chandra spherical function $\varphi_0$. By \cite[Theorem IV.4.3]{Helgason}, 
\begin{equation}\label{eq_phi0}\varphi_0(g)=\int_{ K}e^{\varrho(H(kg))}\,dk,\end{equation} where for $g\in G$, we write $H(g)$ for the element of the Lie algebra $\mathfrak{A}$ of $A$ such that $g\in N e^{H(g)} K$ in the Iwasawa decomposition $G=N  A K$. The generator of $\xi^0$ is $\Delta^{\varphi_0}/2$ and it corresponds to the infinite Brownian loop on $G/K$  (see \cite{Anker}). 
By invariance of $\Delta$ under $K$, the radial part of $\mbox{Rad}(\xi_t)$ and $\mbox{Rad}(\xi_t^0)$ are Markov processes with values in the closed Weyl chamber $\mathfrak A^+$.
 
\subsection{Distinguished Brownian motion on $S$} We define $\Psi_0:G \to \R^+$ by
$$\Psi_0(g)=e^{\varrho(H(g))}.$$
Since $H(gk)=H(g)$ for any $k\in K$, $\Psi_0$ is well defined on $G/K$.
We consider the $\Psi_0$-ground state process $\{S_t, t \geq 0\}$ of the Brownian motion $\xi$ on $G/K$. Using the identification $G/K=S$, we see it as a process on $S$ (it has the same interpretation as the one given in Remark \ref{RemBusemann}). The following definition is used in harmonic analysis (see, e.g., \cite[Proposition 1.2]{Bougerol} , \cite{Cowling,Cowling_Giulini_Hulanicki_Mauceri}),
\begin{definition}
The distinguished Brownian motion on $S$ is the process $S_t,t\geq 0$.
\end{definition}
 The generator of $(S_t)$ is 
$$\frac{1}{2}(\sum_{i=1}^p  X_i^2+2\sum_{j=1}^s N_j^2).$$
One shows as in the hyperbolic case that (see also \cite[Proof of Theorem 6.1]{Anker}),
\begin{lemma}\label{memeloi} The two processes 
$\{\mbox{Rad}(S_t), t \geq 0\}$ and $\{\mbox{Rad}(\xi^{(0)}_t), t \geq0\}$ have the same law. 
\end{lemma} 
The process $(S_t)$ is a solution of a  stochastic differential equation. Indeed,  consider the Brownian motion $(V_t)$ on the Lie algebra $\mathfrak S$, considered as an Euclidean space,
$$ V_t=\begin{pmatrix} 
\lambda_t &\beta_t & \kappa_t \\
0 & 0 & \beta_t^*\\
0 & 0 & -\lambda_t^*\\
\end{pmatrix} \in \mathfrak S,$$
where  the coefficients   $\lambda_t^{r,r}, i\kappa_t^{r,r}/2, 1 \leqÊr \leq p$, and  the real and imaginary parts of 
$\lambda_t^{r,s}/\sqrt{2}, r > s$, $ \beta_t^{k,l}/\sqrt{2}, 1\leq k \leq p, 1\leq l \leq q-p$   and $\kappa_t^{r,s}/\sqrt{2}, 1Ê\leq r < s \leq q$ are standard real independent Brownian motions,  $\lambda_t^{r,s}=0$ if $1 \leq r < s \leq p$ and $\kappa_t^{r,s}=-\bar \kappa_t^{s,r}$, when $q \geq r > s \geqÊ1$ (we use $m^{i,j}$ to denote the $(i,j)$ coefficient of a matrix $m$).

When $(X_t)$ is a continuous semimartingale, we use $\delta X_t$ for its Stratonovich differential and $dX_t$ for its Ito one (see, e.g., \cite{Ikeda_Watanabe}).
\begin{proposition} The distinguished Brownian motion $(S_t)$ is the solution of the following Stratonovich stochastic differential equation in the set of $(p+q)\times (p+q)$ complex matrices,
$$ \delta S_t=S_t \, \delta  V_t, S_0=I_{p+q}.$$
\end{proposition}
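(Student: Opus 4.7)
The plan is to appeal to the standard fact that for a left-invariant Stratonovich SDE on a Lie group, if $Z_1,\ldots,Z_m \in \mathfrak{S}$ and $B^1_t,\ldots,B^m_t$ are independent standard real Brownian motions, then the unique solution of
$$\delta g_t = \sum_{k=1}^m g_t Z_k\,\delta B_t^k,\qquad g_0=I_{p+q},$$
is a diffusion on $S$ whose generator is $\tfrac12 \sum_{k=1}^m Z_k^2$, each $Z_k$ acting as a left-invariant differential operator. Setting $V_t=\sum_k Z_k B_t^k$, the equation takes the compact form $\delta g_t=g_t\,\delta V_t$. Proving the proposition therefore reduces to exhibiting a list of $Z_k$'s for which the generator equals that of the distinguished Brownian motion and for which the associated $V_t$ is the matrix-valued process described in the statement.

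Since the generator of $(S_t)$ is $\tfrac12(\sum_{i=1}^p X_i^2 + 2\sum_{j=1}^s N_j^2)$, the natural choice is $Z_i = X_i$ for $i\le p$ and $Z_{p+j} = \sqrt{2}\,N_j$ for $j\le s$; this gives exactly the right sum-of-squares decomposition because $\tfrac12(\sqrt 2\,N_j)^2 = N_j^2$. With independent standard real Brownian motions $B^{(i)}$ and $\tilde B^{(j)}$, the associated process is $V_t = \sum_{i=1}^p X_i B_t^{(i)} + \sqrt{2} \sum_{j=1}^s N_j \tilde B_t^{(j)}$, and by uniqueness of the diffusion with this generator, any SDE $\delta g_t = g_t\,\delta V_t$ with a Brownian motion $V_t$ of this form yields the distinguished Brownian motion.

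The main step, and the anticipated obstacle, is to identify this $V_t$ with the matrix-valued Brownian motion written out in the statement. Using $\langle U,W\rangle=\tfrac12\mathrm{Tr}(UW^*)$, one first checks that the $p$ natural directions in $\mathfrak A$ indexed by unit real diagonal entries of $l$ each have squared norm $\tfrac12(1+1)=1$, so their coefficients in $V_t$ must be standard Brownian motions, matching the prescription on $\lambda_t^{r,r}$. For $\mathfrak N$, every real or imaginary part of a subdiagonal entry of $l$, of an entry of $b$, and of a strict off-diagonal entry of $c$, as well as the purely imaginary diagonal direction of $c$, yields a basis vector of squared norm $1$; since the corresponding $\sqrt{2}\,N_j$ forces the matching coordinate of $V_t$ to have variance $2t$, one verifies case by case that the prescriptions in the statement --- $\lambda_t^{r,s}/\sqrt 2$, $\beta_t^{k,l}/\sqrt 2$ and $\kappa_t^{r,s}/\sqrt 2$ (for $r<s$) having standard real and imaginary parts, and $i\kappa_t^{r,r}/2$ standard real --- deliver exactly this variance. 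The assumed independence of all scalar components of $V_t$ then furnishes the independence of the driving Brownian motions $B^{(i)}, \tilde B^{(j)}$, completing the identification and hence the proof.
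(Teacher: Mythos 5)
Your proof is correct and fills in, via the standard left-invariant Stratonovich SDE argument (generator $\tfrac12\sum_k Z_k^2$ for $\delta g_t=\sum_k g_t Z_k\,\delta B_t^k$), the details that the paper leaves implicit, and your choice $Z_i=X_i$, $Z_{p+j}=\sqrt{2}\,N_j$ is the right one for matching the generator $\tfrac12\bigl(\sum X_i^2+2\sum N_j^2\bigr)$. One small precision: the direction in $\mathfrak N$ spanned by the matrix with $\kappa^{r,r}=i$ and all other entries zero has squared norm $\tfrac12$, not $1$, under $\langle X,Y\rangle=\tfrac12\mathrm{Tr}(XY^*)$, because that entry has no mirror image in the block structure (unlike subdiagonal $\lambda$-entries, $\beta$-entries and off-diagonal $\kappa$-entries, which each contribute two nonzero matrix entries); it is exactly this factor of $\tfrac12$ that produces the normalization $i\kappa_t^{r,r}/2$ rather than $\kappa_t^{r,r}/\sqrt{2}$, and your case-by-case verification correctly arrives at the stated prescription even though the blanket claim of squared norm $1$ is slightly off for this one direction.
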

 In order to compute the radial component we use a decomposition of $S$ which is slightly different from the factorization $S= N A$ coming from the Iwasawa factorization.
We write, using notation (\ref{S_(_)})
$$S_t=S(l_t,b_t,c_t),$$
and
 \begin{equation}\label{decS} S_t=M_t   L_t,\end{equation} where $$M_t=\begin{pmatrix}  
I_p &b_t & c_t \\
0 & I_{q-p} & b^*_t\\
0 & 0 & I_p
\end{pmatrix}, \,\,\,  L_t=\begin{pmatrix} 
l_t&0& 0 \\
0 & I_{q-p} & 0\\
0 & 0 & l_t^{*-1}
\end{pmatrix}.$$
Recall that the matrix $l_t$ is lower triangular. Its diagonal is the $A$ component of $S_t$ but it also has a part of the $N$ component. 
We solve the equation satisfied by $(S_t)$. By Stratonovich calculus,  $$ \delta S_t=M_t \, \delta  L_t+(\delta M_t )  L_t,$$
therefore, $$M_t  L_t \, \delta V_t=M_t \, \delta  L_t+( \delta M_t)   L_t,$$
which implies that 
$$  \delta V_t=L_t^{-1} \, \delta  L_t+   L_t^{-1} M_t^{-1}(\delta M_t)   L_t,$$
hence
$$  L_t^{-1} \, \delta  L_t=\,  \begin{pmatrix} 
\delta \lambda_t &0 & 0 \\
0 & 0 & 0\\
0 & 0 & -\delta \lambda_t^*\\
\end{pmatrix},$$
and
$$   L_t^{-1} M_t^{-1}(\delta M_t )  L_t=  \begin{pmatrix} 
0 &\delta \beta_t & \delta \kappa_t \\
0 & 0 &\delta \beta_t^*\\
0 & 0 & 0\\
\end{pmatrix}.$$
We obtain that
$ \delta l_t= l_t \, \delta \lambda_t$
and
$$\, \delta M_t= M_t  L_t \,  \begin{pmatrix} 
0 &\delta \beta_t &\delta \kappa_t \\
0 & 0 &\delta \beta_t^*\\
0 & 0 & 0\\
\end{pmatrix}   L_t^{-1},$$
which gives
$$\begin{pmatrix} 
0 &\delta b_t & \delta c_t \\
0 & 0 & \delta b^*_t\\
0 & 0 & 0
\end{pmatrix}= \begin{pmatrix} 
I_p &b_t & c_t \\
0 & I_{q-p} & b^*_t\\
0 & 0 & I_p
\end{pmatrix}\begin{pmatrix} 
0&l_t\delta \beta_t & l_t\delta  \kappa_t l_t ^*\\
0 & 0 & \delta \beta_t^*l_t^*\\
0 & 0 & 0
\end{pmatrix}.$$
This show that:
\begin{proposition} \label{express} 
$$\, \delta l_t= l_t \, \delta \lambda_t, b_t=\int_0^t l_s\, \delta \beta_s, c_t=\int_0^t l_s( \delta  \kappa_s) l_s^*+\int_0^t b_s ( \delta \beta_s^*)l_s^*.$$
\end{proposition}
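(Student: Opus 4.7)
The plan is to differentiate the factorization $S_t = M_t L_t$ using the Stratonovich product rule and match the result with the SDE $\delta S_t = S_t \, \delta V_t$, then extract the three desired formulas block by block.

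Concretely, I first compute $\delta S_t$ in two ways: via the SDE as $M_t L_t \, \delta V_t$, and via the Leibniz rule as $(\delta M_t) L_t + M_t \, \delta L_t$. Multiplying the resulting identity on the left by $L_t^{-1} M_t^{-1}$ isolates
$$\delta V_t \;=\; L_t^{-1}\, \delta L_t \;+\; L_t^{-1} M_t^{-1}(\delta M_t)\, L_t.$$
The key structural observation is that the two summands live in complementary block subspaces: because $L_t$ is block diagonal and $l_t$ is lower triangular, $L_t^{-1} \delta L_t$ takes values in the block-diagonal subspace (the $(1,1)$ and $(3,3)$ blocks of $\mathfrak S$), whereas because $M_t$ is block upper unitriangular, $L_t^{-1} M_t^{-1} (\delta M_t) L_t$ takes values in the strictly block-upper-triangular subspace (the $(1,2)$, $(1,3)$, $(2,3)$ blocks). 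Since the decomposition of $\delta V_t$ along these subspaces is unique, each term can be read off separately.

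From the block-diagonal identification, the $(1,1)$ block gives $l_t^{-1} \delta l_t = \delta \lambda_t$, i.e.\ $\delta l_t = l_t \, \delta \lambda_t$. For the upper-triangular part I would expand
$$\begin{pmatrix} 0 & \delta b_t & \delta c_t \\ 0 & 0 & \delta b_t^* \\ 0 & 0 & 0 \end{pmatrix} \;=\; M_t L_t \begin{pmatrix} 0 & \delta \beta_t & \delta \kappa_t \\ 0 & 0 & \delta \beta_t^* \\ 0 & 0 & 0 \end{pmatrix} L_t^{-1}$$
block by block. The $(1,2)$ entry yields $\delta b_t = l_t \, \delta \beta_t$, and the $(1,3)$ entry yields $\delta c_t = l_t (\delta \kappa_t) l_t^* + b_t (\delta \beta_t^*) l_t^*$. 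Integrating against the initial condition $S_0 = I_{p+q}$ (so $b_0 = c_0 = 0$ and $l_0 = I_p$) gives the three stated expressions.

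The main subtlety is the complementarity of the two block subspaces in which $L_t^{-1} \delta L_t$ and $L_t^{-1} M_t^{-1} (\delta M_t) L_t$ respectively live; once this is in place, the rest is a mechanical matrix multiplication. A minor but important point in executing that multiplication is that the explicit forms of $M_t$ and $L_t$ in (\ref{decS}) ensure the off-diagonal entries of $M_t L_t (\cdots) L_t^{-1}$ collapse to the simple expressions above, with no residual $l_t$--$l_t^{-1}$ cross-terms surviving on the right-hand side.
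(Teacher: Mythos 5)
Your proposal is correct and follows essentially the same route as the paper: differentiate the factorization $S_t = M_t L_t$ by the Stratonovich Leibniz rule, conjugate to isolate $\delta V_t = L_t^{-1}\,\delta L_t + L_t^{-1} M_t^{-1}(\delta M_t) L_t$, use the complementarity of the block-diagonal and strictly block-upper subspaces to read off the two pieces, and then expand $M_t L_t (\cdots) L_t^{-1}$ entry by entry. The block-structure observation you flag as the "main subtlety" is exactly what the paper uses implicitly when it matches the two summands with the $\delta\lambda_t$ and $(\delta\beta_t,\delta\kappa_t)$ parts of $\delta V_t$.
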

In particular,
\begin{corollary}
The process $(l_t)$ is a Brownian motion on the subgroup of $Gl(p,\C)$ consisting of lower triangular matrices with positive diagonal. 
\end{corollary}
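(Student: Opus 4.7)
The plan is to read the corollary directly off the first identity in Proposition \ref{express}, namely the Stratonovich SDE
$$\delta l_t = l_t\,\delta \lambda_t, \qquad l_0 = I_p,$$
(the initial condition comes from $S_0=I_{p+q}$, which forces $l_0=I_p$). The strategy is to recognise this equation as the canonical left-invariant SDE defining a Brownian motion on the Lie group $L\subset Gl(p,\C)$ of lower triangular matrices with positive diagonal. The proof thus reduces to three ingredients: (i) $(\lambda_t)$ is a Brownian motion on the Lie algebra of $L$; (ii) the solution $(l_t)$ actually stays in $L$; (iii) the resulting process has independent, stationary multiplicative increments in the sense of the definition given in Section 3.

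For (i), I would identify the Lie algebra $\mathfrak{L}$ of $L$ with the space of lower triangular complex $p\times p$ matrices with real diagonal. From the description of $V_t$ preceding Proposition \ref{express}, the upper-left $\lambda_t$-block is $\mathfrak{L}$-valued: its diagonal entries $\lambda_t^{r,r}$ are standard real Brownian motions, and the real and imaginary parts of $\lambda_t^{r,s}/\sqrt{2}$ for $r>s$ are independent standard real Brownian motions, all mutually independent. Together with the scalar product $\langle X,Y\rangle = \frac{1}{2}\mathrm{Trace}(XY^*)$ restricted to $\mathfrak L$, this says exactly that $(\lambda_t)$ is a standard Brownian motion on the Euclidean space $\mathfrak L$.

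For (ii), I would argue coordinate-wise on the Stratonovich equation. The $(r,s)$-entries of $l_t\,\delta\lambda_t$ with $r<s$ vanish because both $l_t$ and $\lambda_t$ are lower triangular, so $l_t^{r,s}=0$ persists for $r<s$, and $l_t$ remains lower triangular. On the diagonal the equation decouples into
$$\delta l_t^{r,r} = l_t^{r,r}\,\delta \lambda_t^{r,r}, \qquad l_0^{r,r}=1,$$
whose solution is the geometric Brownian motion $l_t^{r,r}=\exp(\lambda_t^{r,r})>0$. Hence $l_t\in L$ for all $t\ge 0$.

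For (iii), the independent and stationary multiplicative increments follow from the standard left-invariance argument: for $0\le s\le t$, the process $u\mapsto l_s^{-1}l_{s+u}$ solves the same Stratonovich equation driven by the shifted Brownian motion $\lambda_{s+\cdot}-\lambda_s$, which is independent of $\F_s=\sigma(\lambda_u, u\le s)$ and distributed as $(\lambda_\cdot)$. By pathwise uniqueness of the SDE on $L$, $l_s^{-1}l_{s+u}$ is therefore independent of $\F_s$ and has the same law as $l_u$. This verifies the definition of a Brownian motion on $L$ recalled in Section 3. I do not anticipate any serious obstacle; the only delicate point is bookkeeping the orthonormal normalisations of the entries of $\lambda_t$ in step (i), so that the driving process is genuinely a standard Brownian motion on $\mathfrak L$ and not merely a Gaussian process with non-trivial covariance.
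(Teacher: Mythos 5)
Your proof is correct, and it takes the route the paper treats as immediate (the paper offers no argument at all for this corollary --- it simply writes ``In particular'' after Proposition \ref{express}). Your steps (ii) and (iii), i.e.\ that the Stratonovich equation $\delta l_t=l_t\,\delta\lambda_t$ preserves lower triangularity with positive diagonal and that pathwise uniqueness plus left--invariance give independent stationary multiplicative increments, are exactly what the paper's definition of a Brownian motion on a Lie group requires, and they are established correctly.

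One claim in step (i) is, however, not accurate and worth flagging, even though it is harmless here. You assert that with $\langle X,Y\rangle=\tfrac{1}{2}\mathrm{Trace}(XY^*)$ the process $(\lambda_t)$ is a \emph{standard} Brownian motion on $\mathfrak L$. That is not the case: the covariance of $\lambda_t$ is anisotropic with respect to this inner product. Indeed, the generator of the distinguished Brownian motion $(S_t)$ is $\tfrac12\bigl(\sum_i X_i^2+2\sum_j N_j^2\bigr)$, so the $\mathfrak A$--directions (the diagonal entries $\lambda_t^{r,r}$, each a standard real Brownian motion) carry variance $t$ per orthonormal direction, while the $\mathfrak N$--directions (the strictly lower triangular entries $\lambda_t^{r,s}$, $r>s$, whose real and imaginary parts divided by $\sqrt2$ are standard) carry variance $2t$ per orthonormal direction. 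Since the paper's definition of a Brownian motion on a Lie group only asks for a continuous process with independent stationary multiplicative increments --- with no prescribed covariance --- $(\lambda_t)$ is still a Brownian motion on the additive group $\mathfrak L$, and the corollary follows from your (ii) and (iii) unchanged; just drop the word ``standard'' and the appeal to the scalar product in (i).
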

The linear equation  $\delta l_t= l_t \, \delta \lambda_t$ is therefore easy to solve explicitly by induction on $p$. For instance, when $p=2$, if  $$\lambda_t=\begin{pmatrix} 
\lambda_t^{(1)}&0\\
\lambda_t^{(3)} & \lambda_t^{(2)}
\end{pmatrix},$$
we obtain that $$l_t=\begin{pmatrix} 
e^{\lambda_t^{(1)}}&0\\
e^{\lambda_t^{(2)}}\int_0^t e^{\lambda_s^{(1)}-\lambda_s^{(2)}}\, \delta\lambda_s^{(3)} & e^{\lambda_t^{(2)}}
\end{pmatrix}.$$

\subsection{Limit as $q \to +\infty$} We study the asymptotic behaviour of the radial part $Rad(S_t)$ of the distinguished Brownian motion on $S$, using the decomposition (\ref{decS}). We first consider Ito's integral.
\begin{lemma} Almost surely,
$$\lim_{q \to +\infty} \frac{1}{q}\int_0^t [\int_0^s l_u d \beta_u] ( d \beta_s^*)l_s^* =0.$$
\end{lemma}
\begin{proof} For $1 \leq i,j\leq p$, 
$$[\int_0^t (\int_0^s l_u d \beta_u) ( d \beta_s^*)l_s^*]^{i,j}= \sum_{r=1}^{q-p}\sum_{n=1}^p \sum_{m=1}^p\int_0^t (\int_0^s  l_u^{i,n} d \beta^{n,r}_u) 
\bar l_s^{j,m} d \bar \beta^{m,r}_s.
$$
We fix $i,j,n,m$. Conditionally on the sigma-algebra $\sigma(l_s, s \geqÊ0)$, the random variables 
$$\int_0^t (\int_0^s  l_u^{i,n} d \beta^{n,r}_u) 
\bar l_s^{j,m} d \bar \beta^{m,r}_s$$ for $r=1,2,\cdots,$ are independent with the same law, and with expectation equal to $0$ since they are martingales. Therefore, by the law of large numbers,
$$ \frac{1}{q-p}\sum_{r=1}^{q-p}\int_0^t (\int_0^s  l_u^{i,n} d \beta^{n,r}_u) 
\bar l_s^{j,m} d \bar \beta^{m,r}_s$$
converges a.s. to $0$ when $q\to +\infty$, which proves the lemma.  \end{proof}
\begin{proposition} Let $S_t,t \geq 0,$ be the  distinguished Brownian motion. Then, a.s., 
$$\lim_{q \to +\infty}\frac{1}{q}\cosh \mbox{Rad}(S_t) = \mbox{SingVal}(l^{-1}_t\int_0^t l_sl^{*}_s  \, ds).$$
\end{proposition}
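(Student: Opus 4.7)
The starting point is Lemma \ref{lemLu}, which gives
$$2\cosh \mathrm{Rad}(S_t) = \mathrm{SingVal}\bigl(l_t + l_t^{*-1} + c_t l_t^{*-1}\bigr).$$
Since, by Proposition \ref{express}, the lower-triangular $p\times p$ matrix $l_t$ has a law independent of $q$, the contributions $(1/q) l_t$ and $(1/q) l_t^{*-1}$ vanish a.s.\ as $q\to\infty$, and the heart of the matter is to compute the a.s.\ limit of $(1/q)\, c_t$.

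The plan is to split $c_t$ into its Hermitian and skew-Hermitian parts. Applying the Stratonovich product rule to $b_s b_s^*$, and using Proposition \ref{express} together with $\kappa^*=-\kappa$, one obtains
$$c_t + c_t^* = \int_0^t b_s\,\delta\beta_s^*\, l_s^* + \int_0^t l_s\,\delta\beta_s\, b_s^* = b_t b_t^*,$$
so the Hermitian part of $c_t$ is $\tfrac12 b_t b_t^*$. Since $l$ and $\beta$ are independent, $b_t=\int_0^t l_s\, d\beta_s$ as an Ito integral, and Ito's product formula writes $b_t b_t^*$ as the sum of the two Ito martingales $\int b\, d\beta^*\, l^*$ and $\int l\, d\beta\, b^*$ plus the bounded-variation bracket $[b,b^*]_t$, whose $(i,j)$-entry equals $(q-p)\cdot d[\beta^{k,\ell},\overline{\beta^{k,\ell}}]/dt \cdot \int_0^t (l_s l_s^*)^{i,j}\,ds$. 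The $/\sqrt{2}$ normalization of $\beta$ fixes $d[\beta^{k,\ell},\overline{\beta^{k,\ell}}]/dt = 4$. The two Ito martingales divided by $q$ tend to $0$ a.s.\ by the previous lemma (and its Hermitian adjoint), so $(1/q) b_t b_t^* \to 4 \int_0^t l_s l_s^*\,ds$ a.s.

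For the skew-Hermitian part,
$$\tfrac12(c_t - c_t^*) = \int_0^t l_s\,\delta\kappa_s\, l_s^* + \tfrac12\Bigl(\int_0^t b_s\,\delta\beta_s^*\, l_s^* - \bigl(\int_0^t b_s\,\delta\beta_s^*\, l_s^*\bigr)^*\Bigr);$$
the Stratonovich-to-Ito correction for $\int b\,\delta\beta^*\, l^*$ is a positive multiple of $\int_0^t l_s l_s^*\,ds$, hence Hermitian, and therefore cancels with its adjoint in the difference. What remains is an Ito martingale controlled by the previous lemma plus the $p\times p$ (i.e.\ $q$-independent) Stratonovich integral $\int_0^t l_s\,\delta\kappa_s\, l_s^*$; both vanish after dividing by $q$. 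Combining the Hermitian and skew-Hermitian parts, $(1/q)c_t \to 2\int_0^t l_s l_s^*\,ds$ a.s., hence
$$\frac{1}{q}\bigl(l_t + l_t^{*-1} + c_t l_t^{*-1}\bigr) \to 2\int_0^t l_s l_s^*\,ds \cdot l_t^{*-1}\quad\text{a.s.}$$
By continuity of the singular value map and the identity $\mathrm{SingVal}(A) = \mathrm{SingVal}(A^*)$, applied to $A = \int_0^t l_s l_s^*\,ds \cdot l_t^{*-1}$ (whose adjoint is $l_t^{-1}\int_0^t l_s l_s^*\,ds$, since $\int_0^t l_s l_s^*\,ds$ is Hermitian), one concludes $\tfrac{1}{q}\cosh\mathrm{Rad}(S_t) \to \mathrm{SingVal}(l_t^{-1}\int_0^t l_s l_s^*\,ds)$ a.s.

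The main obstacle is the careful bookkeeping of the constant $4$ arising from the complex Brownian motion normalization: it must combine correctly with the factor $1/2$ from Lemma \ref{lemLu} and the factor $1/2$ from the Stratonovich-to-Ito correction to produce the clean coefficient $1$ in the statement.
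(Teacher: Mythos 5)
Your proof is correct, and the core of the argument — converting the $q$-dependent Stratonovich integral in $c_t$ to Ito form, identifying the quadratic-covariation term as $4(q-p)\int_0^t l_s l_s^*\,ds$, killing the Ito martingale by the preceding law-of-large-numbers lemma, and then rescaling Lemma \ref{lemLu} — is the same as the paper's. The one genuine organizational difference is your use of the structural constraint $c_t+c_t^*=b_tb_t^*$ (built into $S$, and which you re-derive from the SDE) to split $c_t$ into its Hermitian part $\tfrac12 b_tb_t^*$ and a skew-Hermitian remainder: you then handle the Hermitian part via Ito's product formula for $b_tb_t^*$ and observe that the skew-Hermitian correction cancels. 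The paper works directly with $c_t=\int_0^t l_s(\delta\kappa_s)l_s^*+\int_0^t b_s(\delta\beta_s^*)l_s^*$, applies the matrix Stratonovich-to-Ito formula to the second term, and reads off the same bracket $\langle\int l\,d\beta,\int d\beta^*l^*\rangle_t=4(q-p)\int_0^t l_sl_s^*\,ds$. Your decomposition makes it a bit more transparent why the limit $2\int_0^t l_sl_s^*\,ds$ is Hermitian, at the cost of an extra (short) argument that the skew-Hermitian contribution vanishes; the paper's direct computation gets this for free since it evaluates the full limit in one stroke. Both routes also need the final step $\mbox{SingVal}(A)=\mbox{SingVal}(A^*)$ with $A=\int_0^t l_sl_s^*\,ds\cdot l_t^{*-1}$, which you handle correctly.
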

\begin{proof} By Proposition \ref{express},
$$c_t=\int_0^t l_s( \delta  \kappa_s) l_s^*+\int_0^t (\int_0^s l_u\, \delta  \beta_u) ( \delta \beta_s^*)l_s^*.$$Since the processes $(l_t)$ and $(\kappa_t)$ do not depend on $q$,
$$\lim_{q\to +\infty} \frac{1}{q} c_t= \lim_{q\to +\infty} \frac{1}{q}\int_0^t (\int_0^s l_u\, \delta  \beta_u) (\delta \beta_s^*)l_s^*.$$
Now, recall the link between Stratonovich and Ito integral: if $X$ and $Y$ are continuous semimartingales,
$$\int_0^t Y  \delta X= \int_0^t Y  dX +\frac{1}{2}\langle X, Y \rangle_t,$$
where, if $X$ and $Y$ are matrices, $\langle X, Y \rangle_t$ is the matrix with $(i,j)$ entrie $$\langle X, Y \rangle_t^{i,j}=\sum_{k} \langle X^{i,k}, Y^{k,j} \rangle_t.$$
Since $(\beta_t)$ is independent of $(l_t)$,
$\int_0^s l_u \delta  \beta_u= \int_0^s l_u \, d \beta_u,$
and
$$\int_0^t [\int_0^s l_u \delta  \beta_u] ( \delta \beta_s^*)l_s^*=
\int_0^t [\int_0^s l_u d \beta_u] ( d \beta_s^*)l_s^* +\frac{1}{2} \langle \int_0^t l_s d\beta_s, \int_0^t d\beta_s^* l_s^* \rangle_t,$$
so it follows from the preceding lemma that
$$\lim_{q \to +\infty} \frac{1}{q} c_t=\frac{1}{2} \lim_{q \to +\infty} \langle \int_0^t l_s d\beta_s, \int_0^t d\beta_s^* l_s^* \rangle_t.$$
On the other hand, for $1 \leqÊi,j \leqÊp$,
\begin{align*} \langle \int_0^t l_s &d\beta_s, \int_0^t d \beta_s^* l_s^* \rangle_t^{i,j}\\
&= \sum_{r=1}^{q-p}\langle (\int_0^t l_s d\beta_s)^{i,r}, (\int_0^t  d \beta_s^*l_s^* )^{r,j}
\rangle_t\\
&= \sum_{r=1}^{q-p}\sum_{k=1}^p  \sum_{l=1}^p
\langle
\int_0^t   l_s^{i,k} d \beta^{k,r}_s, \int_0^t 
\bar l_s^{j,l} d \bar \beta^{l,r}_s \rangle_t
\\
&= \sum_{r=1}^{q-p}\sum_{k=1}^p  \langle \int_0^t   l_s^{i,k} d \beta^{k,r}_s, \int_0^t 
\bar l_s^{j,k} d \bar \beta^{k,r}_s \rangle_t
\\
&= \sum_{r=1}^{q-p}\sum_{k=1}^p  \int_0^t    l_s^{i,k} \bar  l_s^{j,k} d\langle  \beta^{k,r},  \bar \beta^{k,r} \rangle_s.
\end{align*}
Recall that $\beta_t^{k,r}/\sqrt{2}$ is a complex Brownian motions, hence
$$\langle  \beta^{k,r},  \bar \beta^{k,r} \rangle_s=  4s,$$
 therefore
 $$ \sum_{r=1}^{q-p}\sum_{k=1}^p  \int_0^t    l_s^{i,k} \bar  l_s^{j,k} d\langle  \beta^{k,r},  \bar \beta^{k,r} \rangle_s=4(q-p)\sum_{k=1}^p  \int_0^t    l_s^{i,k} \bar  l_s^{j,k} \, ds,$$ and
 $$\langle \int_0^t l_s \,d\beta_s, \int_0^t d \beta_s^* l_s^* \rangle_t=4(q-p)\int_0^t l_sl^{*}_s \, ds.$$
Consequently, a.s.,  
$$\lim_{q \to +\infty} \frac{1}{q} c_t=\lim_{q\to +\infty}Ê\frac{2(q-p)}{q}\int_0^t l_sl^{*}_s \, ds=
2\int_0^t l_sl^{*}_s \, ds,$$ and the proposition follows from Lemma \ref{lemLu} and from the equality $\mbox{SingVal}(\int_0^t l_sl^{*}_sl_t^{*-1} \, ds)=\mbox{SingVal}(l_t^{-1}\int_0^t l_sl^{*}_s \, ds)$.
\end{proof} 

The following is a direct consequence of the previous proposition and Lemma \ref{memeloi}. Let ${\bf 1}\in \R^p$ denote the vector ${\bf 1}=(1,1,\cdots,1)$.
\begin{theorem}\label{SUpqmarkov}  For $SU(p,q)$, the process $\eta_t, t \geqÊ0,$ with values in  $\{(r_1,\cdots,r_p)\in \R^p, r_1 \geqÊr_2 \geqÊ\cdots \geqÊr_p\}$ given by 
$$\eta_t=\mbox{SingVal}(l^{-1}_t\int_0^t l_sl^{*}_s  \, ds)$$
is a Markov process.  
The process $$\mbox{Rad}(\xi_t^0)- \log (2q){\bf 1}, t \geqÊ0, $$ converges in distribution to $\log \eta_t, t \geqÊ0.$
\end{theorem}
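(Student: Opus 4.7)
The plan is to combine the preceding proposition, Lemma \ref{memeloi}, and the asymptotic $\cosh r \sim \tfrac12 e^r$, and then inherit Markovianity from the pre-limit processes. By Lemma \ref{memeloi}, $\{\mbox{Rad}(S_t),t\geq 0\}$ and $\{\mbox{Rad}(\xi_t^0),t\geq 0\}$ have the same law, so it is enough to show that $\mbox{Rad}(S_t)-\log(2q)\mathbf{1}$ converges in distribution to $\log \eta_t$. The preceding proposition already gives, componentwise and almost surely,
$$\frac{1}{q}\cosh \mbox{Rad}(S_t)\longrightarrow \mbox{SingVal}\Bigl(l_t^{-1}\int_0^t l_sl_s^*\,ds\Bigr)=\eta_t.$$
Since the limit expression involves only the $q$-independent processes $(l_t)$ and $(\kappa_t)$, while the high-dimensional noise $(\beta_t)$ is washed out by the law of large numbers invoked in that proof, the convergence in fact holds jointly on a single probability space at any finite collection of times $0\leq t_1<\cdots<t_n$.

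Because $\int_0^t l_sl_s^*\,ds$ is positive definite and $l_t$ is invertible, every component of $\eta_t$ is strictly positive almost surely, so each component of $\cosh \mbox{Rad}(S_t)$, and hence of $\mbox{Rad}(S_t)$ itself, tends to $+\infty$. Using $\cosh r=\tfrac12 e^r(1+o(1))$ as $r\to+\infty$ componentwise yields
$$\mbox{Rad}(S_t)_i-\log(2q)=\log\bigl(2\cosh \mbox{Rad}(S_t)_i\bigr)-\log(2q)+o(1)\longrightarrow \log\eta_t^{(i)}$$
almost surely. Combined with the joint finite-dimensional convergence from the preceding step, this yields the stated convergence in distribution of the whole process $\mbox{Rad}(\xi_t^0)-\log(2q)\mathbf{1}$ to $\log\eta_t$.

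It remains to verify that $\log\eta_t$ (hence $\eta_t$) is Markov. For each $q$, the Brownian motion on $G/K$ and the Harish--Chandra function $\varphi_0$ are both $K$-invariant, so the $\varphi_0$-ground state process is $K$-invariant and its radial part $\mbox{Rad}(\xi_t^0)$ is a Markov process on $\mathfrak{A}^+$; translating by the deterministic vector $\log(2q)\mathbf{1}$ preserves Markovianity. The joint convergence of finite-dimensional marginals then transfers the Markov identity $\E[f(X_{t_n})\mid X_{t_1},\ldots,X_{t_{n-1}}]=\E[f(X_{t_n})\mid X_{t_{n-1}}]$ to $X=\log\eta$, exactly as in the hyperbolic argument at the end of the proof of Theorem \ref{theohyper}. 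This passage to the limit for a Markov property is the one delicate point, but it is safeguarded here by the explicit formula $\eta_t=\mbox{SingVal}(l_t^{-1}\int_0^t l_sl_s^*\,ds)$: the limit is an explicit functional of the finite-dimensional, $q$-independent driving noise $(l_t,\kappa_t)$, so no degeneracy of the limiting transition semigroup can occur.
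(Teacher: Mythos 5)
Your proof is correct and takes essentially the same route the paper intends: reduce via Lemma \ref{memeloi} to $\mbox{Rad}(S_t)$, invoke the preceding proposition for the a.s.\ limit of $\frac{1}{q}\cosh\mbox{Rad}(S_t)$, pass to $\log$ via $\cosh r\sim \tfrac12 e^r$ exactly as in the hyperbolic case (Theorem \ref{theohyper}), and conclude Markovianity as an a.s.\ limit of Markov processes. The paper merely states the theorem is a direct consequence of the preceding proposition and Lemma \ref{memeloi}; your write-up fills in the intended details at the same level of rigor, including the same implicit step of inheriting the Markov property in the limit that the paper already uses in the hyperbolic case.
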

We will compute  the generator of $(\eta_t)$  in  Theorem \ref{Theo_gen_lim}. 
\subsection{$SO(p,q)$ and $Sp(p,q)$}
For $SO(p,q)$ the only difference with $SU(p,q)$ is that all the entries are real. Hence, in the preceding computation, 
$$\langle  \beta^{k,r},  \bar \beta^{k,r} \rangle_s= \langle  \beta^{k,r},   \beta^{k,r} \rangle_s = 2s.$$
Similarly, for $Sp(p,q)$ the entries are quaternionic, therefore, in that case,
$$\langle  \beta^{k,r},  \bar \beta^{k,r} \rangle_s = 8s.$$
So we obtain,
\begin{theorem}\label{Spqmarkov}  For $SO(p,q)$, resp. $Sp(p,q),$ in distribution, $$\lim_{q \to +\infty} \mbox{Rad}(\xi_t^0)- \log (\theta q) {\bf 1} = \log \mbox{SingVal}(l^{-1}_t\int_0^t l_sl^{*}_s  \, ds),$$ with $\theta=1$ for $SO(p,q)$, resp. $\theta=4$ for $Sp(p,q)$, and where $l$ is the Brownian motion on the group of $p\times p$ lower triangular matrices with real, resp. quaternionic, entries and with positive diagonal,  solution of $\delta l_t =l_t \delta \lambda_t$.
\end{theorem}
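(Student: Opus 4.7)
The plan is to follow the proof of Theorem \ref{SUpqmarkov} essentially line for line, substituting at each step the real (resp.\ quaternionic) analogue of the complex matrix manipulations. I would first set up for $G=SO(p,q)$ or $Sp(p,q)$ the same objects as before: the change of basis $P$ bringing the defining form to $\bar J$, the Iwasawa $G=NAK$ decomposition, the solvable subgroup $S=NA$ and horocyclic coordinates $S(l,b,c)$, together with the analogue of Lemma \ref{lemLu} giving $\cosh \mbox{Rad}(S(l,b,c)) = \frac{1}{2}\mbox{SingVal}(l + l^{*-1} + cl^{*-1})$. This works verbatim over $\R$, and over $\HH$ via the standard embedding $M_n(\HH)\hookrightarrow M_{2n}(\C)$, since only linear identities are involved and $l$ is lower triangular. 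The distinguished Brownian motion $S_t$ is defined as the $\Psi_0$-ground state process on $G/K$ and Lemma \ref{memeloi} carries over, so it is enough to show the desired limit for $\mbox{Rad}(S_t)$.

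Next I would decompose $S_t = M_t L_t$ exactly as in (\ref{decS}) and derive the same system of Stratonovich equations as in Proposition \ref{express}:
\begin{equation*}
\delta l_t = l_t\,\delta\lambda_t, \qquad b_t=\int_0^t l_s\,\delta\beta_s, \qquad c_t=\int_0^t l_s(\delta\kappa_s)l_s^* + \int_0^t b_s(\delta\beta_s^*)l_s^*,
\end{equation*}
where now $\lambda,\beta,\kappa$ are Brownian motions valued in the appropriate real form of $\mathfrak{S}$. The lemma showing $q^{-1}\int_0^t[\int_0^s l_u\,d\beta_u](d\beta_s^*)l_s^*\to 0$ applies mutatis mutandis by the law of large numbers (each real/quaternionic component is still a sum of $q-p$ i.i.d.\ centered martingale terms with bounded variance).

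The only place the ground field enters the computation is in the quadratic variation of the driving Brownian motions appearing in the bracket $\langle\int_0^t l_s\,d\beta_s,\int_0^t d\beta_s^* l_s^*\rangle_t$. For $SO(p,q)$ the entries $\beta^{k,r}/\sqrt{2}$ are \emph{real} standard Brownian motions, so $\bar\beta=\beta$ and $\langle\beta^{k,r},\bar\beta^{k,r}\rangle_s = 2s$, exactly half of the complex value. Repeating the computation of Proposition~5.10 then yields $\lim_q c_t/q = \int_0^t l_s l_s^*\,ds$, so
\begin{equation*}
\frac{1}{q}\cosh\mbox{Rad}(S_t)\to \tfrac{1}{2}\mbox{SingVal}\bigl(l_t^{-1}\int_0^t l_sl_s^*\,ds\bigr),
\end{equation*}
giving the renormalization by $\log q\cdot {\bf 1}$, i.e.\ $\theta=1$. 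For $Sp(p,q)$ the entries are quaternionic: $\beta\bar\beta=|\beta|^2$ now involves four independent real components of common variance, so $\langle\beta^{k,r},\bar\beta^{k,r}\rangle_s = 8s$, double the complex value. Hence $\lim_q c_t/q = 4\int_0^t l_sl_s^*\,ds$, and the renormalization is $\log(4q){\bf 1}$, i.e.\ $\theta=4$. Combining this with the asymptotics $\cosh r\sim \tfrac{1}{2}e^r$ and Lemma \ref{memeloi} delivers the theorem.

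The main obstacle is not difficulty but careful bookkeeping in the quaternionic case: one must check that the formulas defining $S$, $l$, $c$, singular values and the Cartan decomposition all transport along $M_p(\HH)\hookrightarrow M_{2p}(\C)$, and that what we call a ``standard quaternionic Brownian motion'' has the claimed bracket $\langle\beta,\bar\beta\rangle_s=8s$ under the chosen normalization of the Killing form on $\mathfrak{S}$. Once the normalization constants are pinned down by the scalar product $\langle X,Y\rangle=\tfrac{1}{2}\mathrm{Tr}(XY^*)$, the rest is a routine transcription.
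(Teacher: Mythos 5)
Your proposal is correct and takes the same approach as the paper: the entire argument for $SU(p,q)$ carries over, with the field of scalars entering only through the quadratic variation $\langle\beta^{k,r},\bar\beta^{k,r}\rangle_s$ ($2s$ real, $4s$ complex, $8s$ quaternionic), which accounts for the normalizations $\theta=1,2,4$. The paper's own proof is in fact just this one-sentence observation; you have spelled out the bookkeeping (the Cartan and Iwasawa data, Lemma \ref{lemLu}, and the transport along $M_p(\HH)\hookrightarrow M_{2p}(\C)$) that the paper leaves implicit.
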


\begin{corollary} In rank one, i.e. $p=1$, $$\lim_{q \to +\infty} \mbox{Rad}(\xi_t^0)- \log (\theta  q) {\bf 1} = \log \int_0^t e^{2B_s-B_t}\, ds,$$
where $B$ is a standard Brownian motion.
\end{corollary}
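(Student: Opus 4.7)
The plan is to derive this Corollary as a direct specialization of Theorem~\ref{Spqmarkov} to the rank-one case $p=1$, so very little new work is needed; essentially one just has to identify the objects appearing in Theorem~\ref{Spqmarkov} when $p=1$ and check that the resulting formula reduces to the Matsumoto--Yor process.

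First I would observe that when $p=1$, every $p\times p$ lower triangular matrix with positive diagonal (in the real, complex, or quaternionic case) is simply a positive real scalar. In the notation of Theorem~\ref{Spqmarkov}, the driving process $\lambda_t$ is then a single scalar, and inspecting the definition of the Brownian motion $V_t$ on $\mathfrak{S}$ (the entries $\lambda_t^{r,r}$ are standard real independent Brownian motions in all three cases $SO, SU, Sp$), one sees that $\lambda_t$ is a standard real Brownian motion. Call it $B_t$.

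Next I would solve the Stratonovich equation $\delta l_t = l_t\,\delta\lambda_t$ with $l_0=1$. In one dimension this is trivially integrated to $l_t = e^{B_t}$ (the Stratonovich exponential equals the ordinary exponential). Therefore $l_s l_s^* = e^{2B_s}$, and
$$l_t^{-1}\int_0^t l_s l_s^*\,ds = e^{-B_t}\int_0^t e^{2B_s}\,ds = \int_0^t e^{2B_s-B_t}\,ds.$$
Since this is a positive scalar, its singular value (in the sense of Definition~\ref{sing_val}) is itself. Taking $\log$ and applying Theorem~\ref{Spqmarkov} yields
$$\lim_{q\to+\infty}\mathrm{Rad}(\xi_t^0)-\log(\theta q){\bf 1} = \log\int_0^t e^{2B_s-B_t}\,ds$$
in distribution, which is the claim.

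There is no real obstacle here; the only point that might look like a check is confirming that in each of the three series the scalar $\lambda_t$ really is standard Brownian motion (as opposed to having a different variance coming from the real/complex/quaternionic normalization of the Killing form). This is immediate from the convention fixed before Proposition~\ref{express}, where the diagonal coefficients $\lambda_t^{r,r}$ are declared to be standard real Brownian motions in all three cases; the factor $\theta\in\{1,2,4\}$ appearing in the centering is absorbed entirely into the off-diagonal bracket $\langle\beta^{k,r},\bar\beta^{k,r}\rangle_s$ computed in the proof of Theorem~\ref{SUpqmarkov}, and plays no role in the rank-one limit beyond shifting the normalization by $\log\theta$. As a consistency check, specializing further to $SO(1,q)$ (so $\theta=1$) recovers Theorem~\ref{theohyper}.
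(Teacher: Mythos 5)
Your proof is correct and follows exactly the route the paper intends: the Corollary is stated without proof because it is a direct specialization of Theorems~\ref{SUpqmarkov} and~\ref{Spqmarkov} to $p=1$, and your computation --- $l_t=e^{B_t}$ for a standard real Brownian motion $B$, hence $\mathrm{SingVal}(l_t^{-1}\int_0^t l_sl_s^*\,ds)=\int_0^t e^{2B_s-B_t}\,ds$ --- is precisely the trivial unwinding of that specialization. The remark that $\theta\in\{1,2,4\}$ only shifts the centering and does not affect the identification of $\lambda_t$ with a standard Brownian motion is also accurate.
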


\section{Generator of $(\eta_t)$}

\subsection{Inozemtsev limit to Quantum Toda Hamiltonian}\label{Inozemtsev} In order to compute the generator $L$ of the process $(\eta_t)$ process in Theorem \ref{SUpqmarkov} it is convenient to relate $Rad(\xi_t^0)$ to a Calogero Moser Sutherland model. 

We consider the general case of a symmetric space $G/K$ associated with one of the groups $G=SO(p,q), SU(p,q)$ and $Sp(p,q)$, where $K$ is a maximal compact subgroup. The root system is of  type $BC_p$.  Recall that, if $\{e_1,\dots,e_p\}$ is an orthonormal basis of 	a real Euclidean space $\mathfrak{a}$ of dimension $p$, with dual basis $(e_i^*)$ then the positive roots $\Sigma^+$ of the root system $BC_p$ are given by
$$\Sigma^+=\{ e_k^*, 2e_k^*, (1 \leq k \leq p), e_i^*+e_j^*, e_i^*-e_j^*, (1 \leqÊi < j \leq p)\}.$$
We associate to each considered group a triplet $m=(m_1,m_2,m_3)$ given by, when $p \geqÊ2$, for $SO(p,q), m=(q-p,0,1)$, for $SU(p,q), m=(2(q-p),1,2)$ and for $Sp(p,q), m= (4(q-p),3,4)$. When $p=1$, the only difference is that $m_3=0$.
 The quantum Calogero-Moser-Sutherland trigonometric Hamiltonian of type $BC_p$ is (see \cite{ Olshanetsky_2, Oshima_Shimeno})
\begin{align*}\label{eqn:sutherlandBC}
H_{CMS}=&H_{CMS}(p,q;r)=\sum_{k=1}^p (\frac{\partial^2}{ \partial{r_k^2}}-\frac{m_1(m_1+2m_2-2)}{4 \sinh^2 r_k}-\frac{m_2(m_2-2)}{ \sinh^2 2r_k})\\
&-\sum_{1 \leqÊi < j \leq p}m_3(m_3-2)(\frac{1}{2\sinh^2(r_i-r_j)}+\frac{1}{2\sinh^2(r_i+r_j)}).
\end{align*}
For $\alpha \in \Sigma^+$ we let  $m_\alpha=m_1$ if $\alpha=e_k^*$, $m_\alpha=m_2$ if $\alpha=2e_k^*$ and $m_\alpha=m_3$ if $\alpha=e_i^*+e_j^*$ or $\alpha= e_i^*-e_j^*.$ Define \begin{equation}\label{rho}\rho=\frac{1}{2}\sum_{\alpha \in \Sigma^+}m_\alpha\, \alpha,\,\, \delta=\prod_{\alpha\in \Sigma_+} (e^\alpha-e^{-\alpha})^{m_\alpha}.  \end{equation}
The radial part $\Delta_R$ of the Laplace Beltrami operator on $G/K$ is 
equal  (see \cite[p.268]{Helgason}) to $$\Delta_R=\sum_{k=1}^p \frac{\partial^2}{ \partial{r_k^2}}+ \sum_{k=1}^p \frac{\partial}{ \partial{r_k}}(\log \delta)\frac{\partial}{ \partial{r_k}},$$
 which can be written as 
$$\label{eqn:sutherland}
\Delta_R=\delta^{-1/2}\circ \{H_{CMS}  -(\rho,\rho)\} \circ \delta^{1/2}$$
 (see \cite{Olshanetsky_1, Olshanetsky_2, Oshima_Shimeno}). On the other hand, we define $\tilde \varphi_0:\R^p\to \R$ by $\tilde \varphi_0(r)=\varphi_0(D_p(r))$ where $\varphi_0$ is the basic Harish Chandra function. Then $\tilde \varphi_0(0)=1$ and
$$\Delta_R\tilde \varphi_0=-(\rho,\rho)\tilde \varphi_0.$$
Therefore
$$H_{CMS} (\delta^{1/2}\tilde \varphi_0)=0.$$
The generator of the radial part $\mbox{Rad}(\xi_t^0)$ of the ground state process $(\xi_t^0)$ associated with $\varphi_0$ is $\Delta^{\tilde \varphi_0}_R/{2}$ where
\begin{align*}\label{deltaetH}{\Delta^{\tilde \varphi_0}_R} &=
\sum_{k=1}^p \frac{\partial^2}{ \partial{r_k^2}}+ 2\sum_{k=1}^p \frac{\partial}{ \partial{r_k}}(\log \delta^{1/2} \tilde \varphi_0)\frac{\partial}{ \partial{r_k}}\\ &= (\delta^{1/2}\tilde \varphi_0)^{-1} \circ H_{CMS} \circ (\delta^{1/2}\tilde \varphi_0).\end{align*}
 For $SU(p,q)$, since $m_3=2$, we have the following, called the  Inozemtsev limit (\cite{Inozemtsev}),
$$\lim_{q \to +\infty} H_{CMS}(p,q;r+\log (2q) {\bf 1})=\sum_{k=1}^p \frac{\partial^2}{ \partial{r_k^2}}-e^{-2r_k}.$$
Similarly, 
for $SO(p,q)$,
$$\lim_{q \to +\infty} H_{CMS}(p,q;r+\log q\, {\bf 1})=\sum_{k=1}^p ( \frac{\partial^2}{ \partial{r_k^2}}-e^{-2r_k})
+\sum_{1 \leqÊi < j \leq p}\frac{1}{2\sinh^2(r_i-r_j)},$$
and for $Sp(p,q)$,
$$\lim_{q \to +\infty} H_{CMS}(p,q;r+\log (4q) {\bf 1})= \sum_{k=1}^p (\frac{\partial^2}{ \partial{r_k^2}}- e^{-2r_k})
-\sum_{1 \leqÊi < j \leq p}\frac{4}{\sinh^2(r_i-r_j)}.$$
We denote by $H_{T}$ the respective limit. 
Since $\delta^{1/2}\tilde \varphi_0$ is a ground state of $H_{CMS}$, it is reasonable to infer that there is a ground state $\Psi$ of $H_{T}$ such that the generator $L$ of $(\eta_t)$  in Theorem \ref{SUpqmarkov} is given by
$$L=\frac{1}{2} \Psi^{-1} \circ H_{T} \circ \Psi.$$
Oshima and Shimeno \cite{Oshima_Shimeno} goes into that direction but is not precise enough to obtain this conclusion. We will see that this hold true for $SU(p,q)$, but that the result is quite subtle. The reason is that on the one hand, there are many ground states for $H_{T}$ and on the other hand that $L$ is the generator of a process with values in a proper subcone of $\R^p$ when $p >1$. We don't know if this is true for $S0(p,q)$ or $Sp(p,q)$.
\subsection{Asymptotics for $SU(p,q)$} We compute the generator of the process $\eta_t$ for $SU(p,q)$, using that in this case the spherical functions are explicitly known for all $(p,q)$. In $SO(p,q)$ and $Sp(p,q)$, such an expression is not known up to now.

For $\lambda\in \R^p$, the Harish Chandra spherical function $\varphi_\lambda^{(p,q)}$ of $SU(p,q)$, is defined by
(see \cite[Theorem IV.4.3]{Helgason}), for $g \in SU(p,q)$,
$$\label{eq_phi0}\varphi_\lambda^{(p,q)}(g)=\int_{ K}e^{(\lambda+\varrho)(H(kg))}\,dk.$$
Let $$A(p,q)=(-1)^{\frac{1}{2}(p(p-1)}2^{2p(p-1)}\prod_{j=1}^{p-1}\{(q-p+j)^{p-j}j!\},$$
$$c(p,q) =A(p,q)(-1)^{\frac{1}{2}p(p-1)}{(2!4!\cdots (2(p-1))!)^{-1}}.$$
By Hoogenboom \cite[Theorem 3]{Hoogenboom}, if for all $i$,   $\lambda_i\not \in \Z$, then when $r\in \R^p$, \begin{equation}\label{hoog}\varphi_\lambda^{(p,q)}(D_p(r))=\frac{A(p,q)\det(\varphi_{\lambda_i}^{(1,q-p+1)}(D_1(r_j))}{\prod_{1\leqÊi<j \leq p}(\cosh 2r_i- \cosh 2r_j)(\lambda_i^2-\lambda_j^2)}.\end{equation}
\begin{lemma} For $r\in \R^p$,
 $$\varphi_0^{(p,q)}(D_p(r))=\frac{c(p,q)}{\prod_{1\leq i<j \leq p}(\cosh 2r_i- \cosh 2r_j)}\det(M_{(p,q)}(r)),$$
where $M_{(p,q)}(r)$ is the $p\times p$ matrix with $(i,j)$ coefficient given by 
$$M_{(p,q)}(r)^{i,j}=\frac{d^{2(j-1)}}{d\lambda^{2(j-1)}}\varphi_{\lambda}^{(1,q-p+1)}(D_1(r_i))_{\{\lambda=0\}}.$$
\end{lemma}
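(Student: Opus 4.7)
The plan is to derive the lemma from Hoogenboom's formula (\ref{hoog}) by letting $\lambda \to 0$. Although (\ref{hoog}) is stated under the genericity condition $\lambda_i \notin \Z$, the spherical function $\varphi_\lambda^{(p,q)}$ is entire in $\lambda \in \C^p$, so both sides extend analytically to $\lambda = 0$. The right-hand side then becomes a $0/0$ indeterminate form: the denominator factor $\prod_{i<j}(\lambda_i^2 - \lambda_j^2)$ vanishes, and the numerator $\det(\varphi_{\lambda_i}^{(1,q-p+1)}(D_1(r_j)))_{i,j}$ also vanishes at $\lambda=0$ because all of its rows collapse to the common vector $(\varphi_0^{(1,q-p+1)}(D_1(r_j)))_j$.

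To evaluate the limit I exploit Weyl-group invariance: the rank-one spherical function $\lambda\mapsto \varphi_\lambda^{(1,q-p+1)}(D_1(r))$ satisfies $\varphi_\lambda = \varphi_{-\lambda}$ and hence is even in $\lambda$, so with $\mu = \lambda^2$ it admits the Taylor expansion
\[
\varphi_\lambda^{(1,q-p+1)}(D_1(r)) \;=\; \sum_{k\geq 0} c_k(r)\, \mu^k,\qquad c_k(r) = \frac{1}{(2k)!}\,\frac{d^{2k}}{d\lambda^{2k}}\varphi_\lambda^{(1,q-p+1)}(D_1(r))\big|_{\lambda=0}.
\]
Setting $\mu_i = \lambda_i^2$, the numerator matrix factors as $(\mu_i^k)_{i,k}\cdot (c_k(r_j))_{k,j}$, and the Cauchy--Binet formula gives
\[
\det\bigl(\varphi_{\lambda_i}^{(1,q-p+1)}(D_1(r_j))\bigr)_{i,j} \;=\; \sum_{0\le k_1<\cdots<k_p} \det(\mu_i^{k_l})_{i,l}\,\det(c_{k_l}(r_j))_{l,j}.
\]
The lowest-order contribution is $(k_1,\ldots,k_p)=(0,1,\ldots,p-1)$, which gives the Vandermonde $\det(\mu_i^{l-1})_{i,l}=(-1)^{p(p-1)/2}\prod_{i<l}(\mu_i-\mu_l)$; this exactly cancels the vanishing denominator factor $\prod_{i<j}(\lambda_i^2-\lambda_j^2)=\prod_{i<j}(\mu_i-\mu_j)$, while every other choice of $(k_l)$ contributes strictly higher order in the $\mu_i$ and drops out in the limit.

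After cancellation and passing to $\mu\to 0$, formula (\ref{hoog}) yields
\[
\varphi_0^{(p,q)}(D_p(r)) \;=\; \frac{(-1)^{p(p-1)/2}A(p,q)}{\prod_{i<j}(\cosh 2r_i-\cosh 2r_j)}\,\det\bigl(c_{l-1}(r_j)\bigr)_{l,j=1}^p.
\]
Since $c_{l-1}(r_j)=M_{(p,q)}(r)^{j,l}/(2(l-1))!$, I can pull the row scalars $1/(2(l-1))!$ out of the determinant and transpose to obtain $(2!\,4!\cdots (2(p-1))!)^{-1}\det M_{(p,q)}(r)$. Matching against the definition $c(p,q)=A(p,q)(-1)^{p(p-1)/2}/(2!\,4!\cdots (2(p-1))!)$ yields the claimed formula. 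The only substantive step is the limiting argument; the even dependence on $\lambda$ is essential, as it lets one present the numerator as a Cauchy--Binet sum with a genuine Vandermonde in $\mu = \lambda^2$ that exactly balances the vanishing denominator, and everything else is bookkeeping of signs and factorials.
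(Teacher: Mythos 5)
Your proposal is correct and follows essentially the same route as the paper: degenerate Hoogenboom's generic formula (\ref{hoog}) as $\lambda\to 0$, exploiting the evenness $\varphi_\lambda^{(1,q-p+1)}=\varphi_{-\lambda}^{(1,q-p+1)}$ to resolve the $0/0$ indeterminacy. The paper dispatches this limit in one line by citing Hua's lemma (Hoogenboom, Lemma~4.1), whereas you reprove that lemma from scratch via the Cauchy--Binet expansion in the variables $\mu_i=\lambda_i^2$ and the observation that only the Vandermonde index set $(0,1,\dots,p-1)$ survives the division by $\prod_{i<j}(\mu_i-\mu_j)$ as $\mu\to 0$; your bookkeeping of the sign $(-1)^{p(p-1)/2}$, the factorials $2!\,4!\cdots(2(p-1))!$, and the transpose correctly reproduces the constant $c(p,q)$. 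The two proofs buy you the same thing; yours is self-contained while the paper's is shorter by outsourcing the determinantal limit to the cited lemma.
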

\begin{proof} Since; for $\lambda\in \R$, $\varphi_{\lambda}^{(1,q-p+1)}=\varphi_{-\lambda}^{(1,q-p+1)}$, 
the lemma follows from Hua's lemma \cite[Lemma 4.1]{Hoogenboom}.
\end{proof}
For $SU(p,q)$, $m_\alpha=2(q-p)$ when $\alpha=e_k^*$, $m_\alpha=1$ when $\alpha=2e_k^*$ and $m_\alpha=2$ when $\alpha=e_i^*+e_j^*$ or $\alpha= e_i^*-e_j^*.$ 
Thus, by (\ref{rho}),
$$\delta(D_p(r))=\prod_k (2\sinh r_k)^{2(q-p)}(2 \sinh 2r_k) 
\prod_{i < j}4 (\cosh 2r_i-\cosh 2r_j)^2.$$
On the other hand,  for $r \in \R$, let $$\delta_{q-p+1}(r)=2^{2(q-p)+1}\sinh^{2(q-p)} r\sinh{2r}. $$
(this is (\ref{delta_q}) adapted to the complex case). Then there is  $C(p,q)>0$ such that, for $r\in \R^p$,
\begin{align*}
 (\delta^{1/2}\varphi_0^{(p,q)})(D_p(r))=C(p,q)\det M_{(p,q)}(r) \prod_{k=1}^p\delta_{q-p+1}(r_k) = C(p,q) \det N_{(p,q)}(r),
\end{align*}
where
$$(N_{(p,q)}(r))^{i,j}= \frac{d^{2(j-1)}}{d\lambda^{2(j-1)}}(\delta_{q-p+1}^{1/2}\tilde\varphi_{\lambda}^{q-p+1})(r_i)_{\{\lambda=0\}},$$
and where $\tilde\varphi_{\lambda}^{q-p+1}(s)=\varphi_{\lambda}^{(1,q-p+1)}(D_1(s))$ when $s\in \R$.
Let (see (\ref{alpha_q}),(\ref{g_q})) $$a(q-p+1)=\frac{\Gamma(q-p)}{\Gamma(2(q-p))2^{5/2}},$$ and
    $$g_{q-p+1}(\lambda,r)=a(q-p+1)(\delta_{q-p+1}^{1/2}\tilde\varphi_{\lambda}^{q-p+1})(r+\log 2(q-p))-K_{\lambda}(e^{-r}).$$
By Corollary \ref{cor_append}, $g_{q-p+1}(\lambda,r)$ and all its derivatives  at $\lambda=0$ converge to $0$  as $q\to +\infty$. 
The generator of $\mbox{Rad}(\xi_t^0)$ is
$$\frac{1}{2}\sum_{i=1}^p \frac{\partial^2}{\partial r_i^2} + \sum_{i=1}^p \frac{\partial}{\partial r_i} \log  (\delta^{1/2}\varphi_0^{(p,q)})(D_p(r))\frac{\partial}{\partial r_i}.$$
Recall that $$\eta_t=\mbox{SingVal}(l^{-1}_t\int_0^t l_sl^{*}_s  \, ds).$$
We deduce from Corollary \ref{cor_append} that:

\begin{theorem}\label{Theo_gen_lim}
The generator of the process $\log \eta_t, t \geq0,$ in Theorem \ref{SUpqmarkov} is
$$\frac{1}{2}\sum_{i=1}^p \frac{\partial^2}{\partial r_i^2} + \sum_{i=1}^p \frac{\partial}{\partial r_i} \log  \tilde K^{(p)}(e^{-r})\frac{\partial}{\partial r_i}, $$
where $\tilde K^{(p)}(e^{-r})$ is the determinant of the $p\times p$ matrix with $(i,j)$coefficient  $$\frac{d^{2(j-1)}}{d\lambda^{2(j-1)}}K_\lambda(e^{-r_i})_{\{\lambda=0\}}.$$
\end{theorem}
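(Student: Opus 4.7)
The plan is to pass to the limit in the generator of $\mathrm{Rad}(\xi_t^0)-\log(2q){\bf 1}$ and, invoking Theorem \ref{SUpqmarkov}, identify the limit operator as the generator of $\log\eta_t$ by a standard convergence-of-Markov-generators argument.

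From the higher-rank version of Proposition \ref{prop_gen_bas} recalled just before the statement, the generator of $\mathrm{Rad}(\xi_t^0)$ equals
$$\frac{1}{2}\sum_{i=1}^p\frac{\partial^2}{\partial r_i^2}+\sum_{i=1}^p\Bigl(\frac{\partial}{\partial r_i}\log(\delta^{1/2}\varphi_0^{(p,q)})(D_p(r))\Bigr)\frac{\partial}{\partial r_i}.$$
Translation of $r$ commutes with differentiation, so the generator of the shifted process is obtained by replacing $r$ by $r+\log(2q){\bf 1}$ in the argument of the logarithm. The determinantal identity $(\delta^{1/2}\varphi_0^{(p,q)})(D_p(r))=C(p,q)\det N_{(p,q)}(r)$ together with the fact that $r$-independent factors disappear from any logarithmic $r$-derivative reduces the problem to the $r$-derivatives of $\log\det\widetilde N_q(r)$, where
$$\widetilde N_q(r)^{i,j}=\frac{d^{2(j-1)}}{d\lambda^{2(j-1)}}\bigl[a(q-p+1)(\delta_{q-p+1}^{1/2}\tilde\varphi_\lambda^{q-p+1})(r_i+\log(2q))\bigr]\Big|_{\lambda=0}.$$

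I would then apply Corollary \ref{cor_append} entrywise. Since $\log(2q)-\log 2(q-p)\to 0$, up to a smoothly vanishing shift I may feed $r_i+\log 2(q-p)$ into the argument, and the corollary asserts that $g_{q-p+1}(\lambda,r)$ together with all its $\lambda$-derivatives at $\lambda=0$ tends to $0$ smoothly in $r$. After $2(j-1)$ derivatives in $\lambda$, each entry of $\widetilde N_q(r)$ thus converges, together with its $r$-derivatives, to $\frac{d^{2(j-1)}}{d\lambda^{2(j-1)}}K_\lambda(e^{-r_i})|_{\lambda=0}$. By smoothness of $A\mapsto\log\det A$ near any invertible matrix, this gives
$$\frac{\partial}{\partial r_i}\log\det\widetilde N_q(r)\longrightarrow\frac{\partial}{\partial r_i}\log\tilde K^{(p)}(e^{-r}),$$
locally uniformly in the interior of the chamber. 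Combining this convergence of generators on smooth compactly supported test functions with the distributional convergence from Theorem \ref{SUpqmarkov} identifies the stated operator as the generator of $\log\eta_t$.

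The main obstacle is the verification that $\tilde K^{(p)}(e^{-r})$ does not vanish in the interior of the chamber, so that $\log\det$ is smooth there. I would address this by a Vandermonde-style factorization of $\det\widetilde N_q$ extracting $\prod_{i<j}(e^{-2r_i}-e^{-2r_j})$ (paralleling the Hua lemma used just above the theorem) together with the analyticity of $K_\lambda$ in $\lambda$; this is the point at which the rank-$p$ case becomes genuinely more delicate than rank one.
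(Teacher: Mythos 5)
Your proposal follows essentially the same route as the paper: express the prelimit generator via the Hoogenboom determinantal formula $(\delta^{1/2}\varphi_0^{(p,q)})(D_p(r))=C(p,q)\det N_{(p,q)}(r)$, apply Corollary \ref{cor_append} entrywise to the matrix, and pass to the limit in the drift term. The paper is in fact terser than you are, stating only ``we deduce from Corollary \ref{cor_append} that'' without spelling out the entrywise convergence or the harmless discrepancy between the shifts $\log(2q)$ and $\log 2(q-p)$, both of which you handle correctly. The one issue you flag as an obstacle --- non-vanishing of $\tilde K^{(p)}(e^{-r})$ on the interior of the chamber, needed for $\log\det$ to make sense as a smooth drift --- is also not addressed in the paper; it is a genuine point, and your proposed Vandermonde-style factorization together with the remark that $\tilde K^{(p)}$ is a ground state of the Toda Hamiltonian vanishing exactly on the walls is the natural way to close it, though you would still need an argument (e.g.\ a positivity or total-positivity property of the Macdonald kernel $K_\lambda$, or the fact that the ground state of a Schr\"odinger operator does not vanish away from the boundary) to rule out interior zeros rather than merely exhibiting a Vandermonde prefactor.
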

\begin{remark} The function $r\mapsto  \tilde K^{(p)}(e^{-r})$ is a ground state of the Toda Hamiltonian $$\sum_{k=1}^p \frac{\partial^2}{ \partial{r_k^2}}-e^{-2r_k}$$  equal to $0$ on the walls $\{r_i=r_j\}$. 
Notice that $\prod_{k=1}^p K_0(e^{-r_k})$ is another ground state.
\end{remark}

\begin{remark} Exactly as in Proposition \ref{condit}, but using (\ref{hoog}), one has that, for $r=\log \eta_t$,
           $$\E(e^{\sum_{k=1}^{p}\lambda_k l_t^{(k,k)}}| \sigma(\eta_s, 0\leqÊ  s \leq t))=\frac{c_p \tilde K_\lambda(e^{-r})
       }{\tilde K^{(p)}(e^{-r})\prod_{i<j}(\lambda_i^2-\lambda_j^2)},$$   
   where    $ c_p=(-1)^{\frac{1}{2}p(p-1)}(2!4!\cdots (2(p-1))!$ and $\tilde K_\lambda(e^{-r})$ is the determinant of the matrix $(K_{\lambda_i}(e^{-r_j}))$. As in Remark \ref{cameron}, this implies that when the diagonal part of $(l_t)$ has a drift $\lambda$, then $\log \eta_t$ is a Markov process with generator given by
$$\frac{1}{2}\sum_{i=1}^p \frac{\partial^2}{\partial r_i^2} + \sum_{i=1}^p \frac{\partial}{\partial r_i} \log \tilde K_\lambda(e^{-r})\frac{\partial}{\partial r_i}. $$
 \end{remark}

\begin{remark} After the submission of this paper, Rider and Valko \cite{Rider} have  posted a paper where they consider a Brownian motion $M_t$ on $Gl(p,\R)$, solution of the stochastic differential equation $dM_t=M_tdB_t+(\frac{1}{2}+\mu)M_t\, dt$ where $B_t$ is the $p\times p$ matrix made of $p^2$ independent standard real Brownian motions.
They show in particular, using a similar approach as Matsumoto and Yor, that $Z_t=M^{-1}_t\int_0^t M_s M_s^*\, ds, t \geq 0,$ is a Markov process if $|\mu| > (p-1)/2$ and describe its generator. For $Gl(p,\C)$, when $\mu=0$, $\mbox{SingVal}(Z_t)=\mbox{SingVal}(\eta_t)$, where  $\eta_t$ is given in the preceding remark with $\lambda=\varrho$.\end{remark}
 
\section{Series of homogeneous trees}

We now consider the same question for $q$-adic symmetric spaces of rank one.
 The most important series is given by the symmetric spaces associated with $Gl(2,\Q_q)$ where $q$ is the sequence of prime numbers which are  the homogeneous trees $\T_q$. The analogue of the Brownian motion is the simple random walk.  
 Let us consider more generally the tree $\T_q$, where $q$ is an arbitrary integer. We will deal to this case by an elementary treatment. By definition, $\T_q$ is the connected graph without cycle whose vertices have exactly $q + 1$ edges. We choose an origin $o$ in this tree. The simple random walk $W_n, n \geq 0$, starts at $o$ and at each step goes to one of its $q+1$ neighbours with uniform probability.

 Let us recall some  (well known) elementary facts about $(W_n)$ (\cite{Figa-Talamanca}, \cite{Woess}). For the convenience of the reader we give the simple proofs.
Let us write  $x\sim y$ when $x$ and $y$ are neighbours. The probability transition of $W_n$ is 
$$P(x,y)=\frac{1}{q+1}, \mbox{iff } x\sim y.$$
Let $d$ be the standard distance on the tree. The radial part of $W_n$ is the process $X_n=d(o,W_n)$, it is a Markov chain on $\N$ with transition probability $R$ given by $R(0,1)=1$, and if $n >0$,
 $$R(n,n-1)=\frac{1}{q+1}, R(n,n+1)=\frac{q}{q+1}.$$ A function $f:\T_q\to \R$ is called radial when 
$f(x)$ depends only on $d(o,x)$. In this case one defines
$\tilde f: \N \to \R$ by
$$\tilde f(n)=f(x), \mbox{ when } d(o,x)=n.$$

  Let us introduce the average operator $A$ which associates to a function $f:\T_q\to \R$ 
 the radial function $Af:\T_q\to \R$ defined by
 $$Af(x)=\frac{1}{S(o,x)}\sum_{y\in S(o,x)}f(y),$$
 where $S(o,x)$ is the sphere $S(o,x)=\{y\in \T_q; d(o,y)=d(o,x)\}$.
 It is easy to see that $PA=AP$, which implies that,
  \begin{lemma} \label{lem_aver}When a function $f:\T_q\to \R$ is a $\lambda$-eigenfunction of $P$ (i.e. $Pf=\lambda f$), then $Af$ is a radial  $\lambda$-eigenfunction of $P$  and $\widetilde {Af}$ is a $\lambda$-eigenfunction of $R$. Conversely, if 
 $\tilde g:\N \to \R$ is a $\lambda$-eigenfunction of $R$, then $ g(x)=\tilde g(d(o,x))$ is a radial $\lambda$-eigenfunction of $P$. \end{lemma}
 Let $\lambdaÊ>0$, $\tilde g:\N\to \R^+$ is a $\lambda$-eigenfunction of $R$ when $\tilde g(1)=\lambda \tilde g(0)$ and
 $$ \tilde g(n-1)+q \tilde g(n+1)=\lambda(q+1) \tilde g(n), n \geq 1.$$
 One sees easily that there exists a positive solution of this equation if and only if $\lambda \geqÊ\frac{2\sqrt{q}}{q+1}$. Hence, the  principal generalized eigenvalue $ \varrho$ of $R$ is
 $$\varrho = \frac{2\sqrt{q}}{q+1},$$
 the associated eigenfunction $\tilde \varphi_0$  is 
 $$\tilde \varphi_0(n)=(1+n\frac{q-1}{q+1})\frac{1}{q^{n/2}}.$$
 It is the only one if we suppose that $\tilde \varphi_0(0)=1$. It follows from Lemma \ref{lem_aver} that $ \varrho$ is also the principal generalized eigenvalue of $P$. 
The function $\varphi_0:\T_q\to \R$ given by  $\varphi_0(x)= \tilde \varphi_0(d(o,x))$ for $x\in \T_q$, is a radial $\varrho$-eigenfunction of $P$. By uniqueness of $\tilde \varphi_0$, $\varphi_0$ is the unique radial ground state of the random walk $W_n,n \geq 0$, equal to 1 at $o$.
  We consider the $\varphi_0$-ground state process $W^{(0)}_n, n \geq 0,$ on $\T_q$, defined as the Markov chain with transition probability $P^{(0)}$ given by, for $x,y \in \T_q$,
  $$P^{(0)}(x,y)=\frac{1}{\varrho \phi_0(x)}P(x,y)\phi_0(y).$$ Its radial part is a Markov chain on $\N$ with transition probability, for $m,n \in \N$,   $$R^{(0)}(n,m)=\frac{1}{\varrho\tilde \varphi_0(n)}R(n,m)\tilde \varphi_0(m).$$
  It is clear that
  $$\lim_{q \to +\infty}R^{(0)}(m,n)=B(m,n),$$
  where $B$ is transition probability of the so-called discrete Bessel(3) Markov chain on $\N$, namely,
  $$B(0,1)=1, B(n,n+1)=\frac{n+2}{2(n+1)}, B(n,n-1)=\frac{n}{2(n+1)}.$$
Therefore,
 \begin{proposition}\label{prop_conv_bes}
 When $q\to +\infty$, the radial part of the $\varphi_0$-ground state process $W^{(0)}_n$ of the simple random walk on the tree $\T_q$ converges in distribution to the discrete Bessel(3) chain.
 \end{proposition}
 
  Our aim is to obtain a path description of this limit process in terms of the simple random walk on $\Z$. We use the analogue of the distinguished Brownian motion. 
  A geodesic ray $\omega$  is an infinite sequence $x_n, n \geq 0,$ of distinct vertices in $\T_q$ such that $d(x_n,x_{n+1})=1$ for any $n \geqÊ0$. 
 We fix such a geodesic ray $\omega=\{x_n , n \geq 0\}$ starting from $x_0=o$. For any $x$ in $\T_q$ let $\pi_\omega(x)$ be the projection of $x$ on $\omega$, defined by $d(x,\pi_\omega(x))=\min\{d(x,y), y \in \omega\}$ and $\pi_\omega(x)\in \omega$.  The height $h(x)$ of $x\in \T_q$ with  respect to $\omega$ is
$$h(x)=d(x,\pi_\omega(x))-d(o,\pi_\omega(x)),$$
($h$ is a Busemann function). For $n \in \Z$ the horocycle $H_n$ is 
$$H_n=\{x\in \T_q, h(x)=n\}.$$

\begin{figure}
\setlength{\unitlength}{0.7mm}

\centering
\framebox{\begin{picture}(100,100)(-4,-5)

\multiput(9,20)(2,0){43}{\line(1,0){1}}
\multiput(19,40)(2,0){38}{\line(1,0){1}}
\multiput(29,60)(2,0){33}{\line(1,0){1}}
\multiput(39,80)(2,0){28}{\line(1,0){1}}

\put(16,42){$o$}
\put(-1,20){$H_{1}$}
\put(8,40){$H_{0}$}
\put(18,60){$H_{-1}$}
\put(28,80){$H_{-2}$}

\put(0,0){\vector(1,2){45}}
\put(47,90){$\omega$}
\put(2,-2){$\omega'$}

\put(40,80){\circle*{2}}
\put(40,80){\line(2,-1){40}}
\put(40,80){\line(1,-1){20}}

\put(70,80){\circle*{2}}
\put(70,80){\line(2,-1.3){22}}
\put(70,80){\line(2,-2){20}}
\put(70,80){\line(2,-1.6){22}}
\put(70,80){\line(-2,2){10}}

\put(90,80){\circle*{2}}
\put(90,80){\line(2,-1.3){5}}
\put(90,80){\line(2,-2){5}}
\put(90,80){\line(2,-1.6){5}}
\put(90,80){\line(-2,1.3){14}}

\put(30,60){\circle*{2}}
\put(60,60){\circle*{2}}
\put(80,60){\circle*{2}}
\put(90,60){\circle*{2}}
\put(60,60){\line(2,-1.3){30}}
\put(60,60){\line(2,-2){20}}
\put(60,60){\line(2,-1.6){25}}
\put(80,60){\line(1,-0.5){14}}
\put(80,60){\line(1,-1){14}}
\put(80,60){\line(1,-0.7){14}}
\put(30,60){\line(2,-1){40}}
\put(30,60){\line(1,-1){20}}

\put(20,40){\circle*{2}}
\put(20,40){\line(2,-1){40}}
\put(20,40){\line(1,-1){20}}
\put(50,40){\circle*{2}}
\put(50,40){\line(2,-2){20}}
\put(50,40){\line(2,-1.3){30}}
\put(50,40){\line(2,-1.6){25}}
\put(70,40){\circle*{2}}
\put(70,40){\line(2,-1.5){24}}
\put(70,40){\line(2,-2.2){18}}
\put(70,40){\line(2,-1.8){22}}
\put(80,40){\circle*{2}}
\put(80,40){\line(2,-2){14}}
\put(80,40){\line(2,-1.4){14}}
\put(80,40){\line(2,-1.7){14}}
\put(85,40){\circle*{2}}
\put(85,40){\line(2,-1.2){10}}
\put(85,40){\line(2,-1.4){10}}
\put(85,40){\line(2,-1.6){10}}

\put(90,40){\circle*{2}}
\put(90,40){\line(2,-1){5}}
\put(90,40){\line(2,-2){5}}
\put(90,40){\line(2,-1.5){5}}

\put(50,40){\circle*{2}}
\put(50,40){\line(2,-2){20}}
\put(50,40){\line(2,-1.3){30}}
\put(70,40){\circle*{2}}
\put(70,40){\line(2,-1.5){24}}
\put(70,40){\line(2,-2.2){18}}

\put(40,20){\circle*{2}}
\put(40,20){\line(1,-1.1){18}}
\put(40,20){\line(1,-0.8){25}}
\put(40,20){\line(1,-0.9){22}}
\put(60,20){\circle*{2}}
\put(60,20){\line(1,-1.2){16}}
\put(60,20){\line(1,-1){20}}
\put(60,20){\line(1,-1.1){18}}
\put(70,20){\circle*{2}}
\put(70,20){\line(2,-2){20}}
\put(70,20){\line(2,-2.2){18}}
\put(70,20){\line(2,-2.5){16}}
\put(75,20){\circle*{2}}
\put(75,20){\line(2,-2){20}}
\put(75,20){\line(2,-2.2){18}}
\put(75,20){\line(2,-1.8){20}}
\put(80,20){\circle*{2}}
\put(80,20){\line(2,-1.5){15}}
\put(80,20){\line(2,-2){15}}
\put(80,20){\line(2,-1.7){15}}
\put(88,20){\circle*{2}}
\put(88,20){\line(2,-1.5){7}}
\put(88,20){\line(2,-1.7){7}}
\put(88,20){\line(2,-2){7}}

\put(92,20){\circle*{2}}
\put(92,20){\line(2,-1.4){3}}
\put(92,20){\line(2,-1.7){3}}
\put(92,20){\line(2,-2.1){3}}

\put(10,20){\circle*{2}}
\put(10,20){\line(2,-1){40}}
\put(10,20){\line(1,-1){20}}
\end{picture}}
\caption{Example $\T_3$}\label{ExampleT3}
\end{figure}

As described by Cartier \cite{Cartier} (see also \cite{Cartwright}), $\T_q$ can be viewed as an infinite genealogical tree with $\omega$ as the unique "mythical ancestor". Then $H_n$ is the $n$-th generation. Any vertex (individual) $x$ in $\T_q$ has $q$ neighbours of height $h(x)+1$ (his children) and one neighbour of height $h(x)-1$ (his parent). This implies that 
if $W_n$ is the simple random walk on $\T_q$, then  $h(W_n)$ is the random walk on $\Z$ with transition probability $H$ given by, for any $n \in \Z$,  
 $$H(n,n-1)=\frac{1}{q+1},  H(n,n+1)=\frac{q}{q+1}. $$
A positive eigenfunction $f$ of this kernel is  a solution of 
$$ f(n-1)+q f(n+1)=\lambda(q+1) f(n), n \in \Z, f(0)=1,$$
and one sees easily that it exists if and only if  $$\lambda \geq \frac{2\sqrt{q}}{q+1}=\varrho.$$ As a result $\varrho$ is also the principal generalized eigenvalue (or spectral radius) of $H$. Moreover the associated eigenfunction is
$f(n)={q^{-n/2}}, n\in \N.$
This implies that  the function 
$$\varphi_\omega(x)=q^{-h(x)/2}, \, x \in \T_q,$$
 is, like $\varphi_0$, a ground state of the Markov chain $W_n$. By Lemma \ref{lem_aver},
 $A\varphi_\omega$ is a radial ground state of $P$. By uniqueness, this implies that 
\begin{equation}\label{eq_phi_omega}\varphi_0=A\varphi_\omega.\end{equation}
\begin{definition} Let $S_n,n \geqÊ0,$ be the $\varphi_\omega$-ground state process associated to $(W_n)$ on $\T_q$ starting from $o$. We denote by $Q_\omega$ its probability transition.
\end{definition}
We have,
\begin{align*}
Q_\omega(x,y)&=P(x,y)\frac{\varphi_\omega(y)}{\varrho \varphi_\omega(x)}
=P(x,y)\frac{q^{-h(y)/2}}{\varrho q^{-h(x)/2}}
=P(x,y)\frac{(q+1)q^{h(x)-h(y)/2}}{2\sqrt{q} }.\end{align*}
Therefore, when $x\sim y$, since $P(x,y)=1/(q+1)$,
\begin{align*}
  Q_\omega(x,y)=
  \begin{cases}
    1/2q, \mbox{ when } h(y)=h(x)+1,\\
    1/2, \mbox{ when } h(y)=h(x)-1.
  \end{cases}
\end{align*}
As in Proposition \ref{same_law},
\begin{lemma} \label{samelaw} The process $d(o,S_n), n \geqÊ0,$ has the same law as $d(o,W^{(0)}_n)$, $nÊ\geq 0$.
\end{lemma}

\begin{proof} Let $N\in \N$ and  $F:\Z^{N+1}\to \R^+$. For any isometry $k$ of the tree $\T_q$ which fixes $o$, $W_n, n\geq0$, has the same law as $k.W_n, nÊ\geq0$. Hence, using also (\ref{eq_phi_omega}),
\begin{align*}
\E(F(d(o,S_n), n \leqÊN))&=\varrho^{-N}\E(F(d(o,W_n), n \leqÊN)\varphi_\omega(W_N))\\
&=\varrho^{-N}\E(F(d(o,k.W_n), n \leqÊN)\varphi_\omega(k.W_N))\\
&=\varrho^{-N}\E(F(d(o,W_n), n \leqÊN)A\varphi_\omega(W_N))\\
&=\varrho^{-N}\E(F(d(o,W_n), n \leqÊN)\varphi_0(W_N))\\
&=\E(F(d(o,W^{(0)}_n), n \leqÊN)). 
\end{align*}\end{proof}

\begin{figure}
\centering
\framebox{\begin{picture}(80,50)(4,4)

\put(55,52){$\omega$}

\multiput(5,30)(2,0){40}{\line(1,0){1}}
\multiput(30,4)(0,2){26}{\line(0,1){1}}
\put(25,30){$\scriptstyle O$}
\put(31,52){$\scriptstyle  Oy$}
\put(78,31){$\scriptstyle Ox$}


\put(30,54.5){\vector(0,1){1}}

\put(84,30){\vector(1,0){1}}

\put(6,4){$\omega'$}

\put(10,10){\circle*{2}}
\put(20,20){\circle*{2}}
\put(30,30){\circle*{2}}
\put(40,40){\circle*{2}}
\put(50,50){\circle*{2}}

\thicklines

\put(30,10){\circle{2}}
\put(50,10){\circle{2}}
\put(70,10){\circle{2}}

\put(40,20){\circle{2}}
\put(60,20){\circle{2}}
\put(80,20){\circle{2}}

\put(30,30){\circle{2}}
\put(50,30){\circle{2}}
\put(70,30){\circle{2}}

\put(60,40){\circle{2}}
\put(80,40){\circle{2}}

\put(70,50){\circle{2}}

\put(10,10){\line(1,-1){6}}
\put(20,20){\line(1,-1){16}}
\put(30,30){\line(1,-1){26}}
\put(40,40){\line(1,-1){36}}
\put(50,50){\line(1,-1){34}}
\put(66,54){\line(1,-1){18}}

\put(4,4){\vector(1,1){50}}
\end{picture}}

 \centering \caption{The graph $G$}
 \label{GraphG} 
\end{figure}
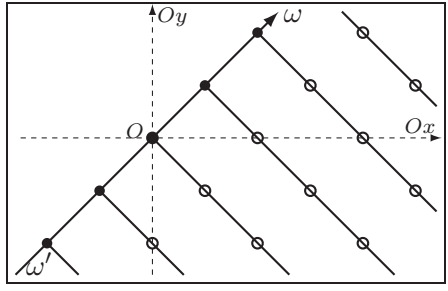

We choose in the tree $\T_q$ another geodesic ray $\omega'=\{x_{-n}, n \geq 0\}$ such that $\omega \cap \omega'=\{o\}$. Then $\omegaÊ\omega'=\{x_n, n\in \Z\}$  is a two-sided geodesic. We consider the following (unoriented) graph $G$ embedded in $\Z^2$ (see Figure \ref{GraphG}): 
the edges of $G$ are the points with coordinates
$$(k +2n, k), k \in \Z, n \in \N,$$
and the vertices are the first diagonal segments joining $(k,k)$ and $(k+1,k+1)$ and the segments joining  $(k+2n, k)$ and $(k+2n+1, k-1)$ for $n \geq 0$. It is obtained from the tree by gathering siblings (with one exception on each vertex of $\omegaÊ\omega'$).   Let $\Psi:\T_q \to G$ defined as follows: for $x \in \T_q$, 
$$\Psi(x)= (-h(x)+2d(x, \omega' \omega),-h(x)).$$
Then $\Psi(S_n), n \geqÊ0, $ is the nearest neighbour Markov chain  on the graph $G$
with probability transition $P_G$ given by
\begin{align*}&P_G((k,k),(k-1,k-1))=1/2q,\\
&P_G((k,k),(k+1,k+1))=1/2,\\
&P_G((k,k), (k+1,k-1)=(q-1)/2q,
\end{align*}
and, for $n >0$ and $\varepsilon=1$ or $-1$,
$$P_G((k+2n,k), (k+2n-\varepsilon,k+\varepsilon))=1/2.$$
When $q\to +\infty$, the Markov chain $\Psi(S_n)$ converges to the Markov chain $\tau_n, n \in \N,$ on the graph $G$ with transition probability $Q$ given by 
\begin{align*}&Q((k,k),(k-1,k-1))=0,\\ &Q((k,k),(k+1,k+1))=1/2,\\
&Q((k,k), (k+1,k-1)=1/2,\end{align*}
and for $n >0$ and $\varepsilon=1$ or $-1$,
$$Q((k+2n,k), (k+2n-\varepsilon,k+\varepsilon))=1/2.$$
When the chain $\tau_n$  starts from $(0,0)$ it cannot reach $(-1,-1)$ therefore it lives on the subgraph $\tilde G$ of $G$ described in  Figure \ref{Pitmanswalk}. It is the one which appears in the discrete time Pitman theorem (see Figure 8 in Biane \cite{Biane}). 
It  has the following simple description in terms of the simple symmetric random walk $\Sigma_n,n \geqÊ0,$ on $\Z$. Let $\Sigma_n=\varepsilon_1+\cdots+\varepsilon_n$ where the $(\varepsilon_n)$ are i.i.d. random variables such that $\P(\varepsilon_n=1)=\P(\varepsilon_n=-1)=1/2$ and $$M_n=\max\{\Sigma_k, k \leq n\}.$$ The Markov chain $\tau_n$ on $\tilde G$ starting from $(0,0)$ is, in $xOy$ coordinates, $$\tau_n=(2M_n-\Sigma_n, \Sigma_n), $$ (see Figure \ref{Pitmanswalk}). The radial component of $S_n$ is $d(o,S_n)= \delta((0,0), \Psi(S_n))$ for the graph distance $\delta$ on $ G$. Since $\delta((0,0), (a,b))=a$, we obtain using Lemma \ref{samelaw}   that,
\begin{theorem} The limit as $q\to +\infty$ of the processes $d(o,S_n), n \geq 0,$ and $d(o,W_n^{(0)}), n\geq0,$ have the same law as $$2\max_{0Ê\leqÊk \leqÊn}\Sigma_k-\Sigma_n, n \geqÊ0,$$ where $\Sigma_n$ is the simple symmetric random walk on $\Z$.
\end{theorem}
By Proposiiton \ref{prop_conv_bes}, we recover the following theorem of Pitman.
\begin{corollary}[\cite{Pitman}]
The process $2\max_{0Ê\leqÊk \leqÊn}\Sigma_k-\Sigma_n, n \geqÊ0,$ is the discrete Bessel(3) Markov chain.
\end{corollary}
\setlength{\unitlength}{1mm}

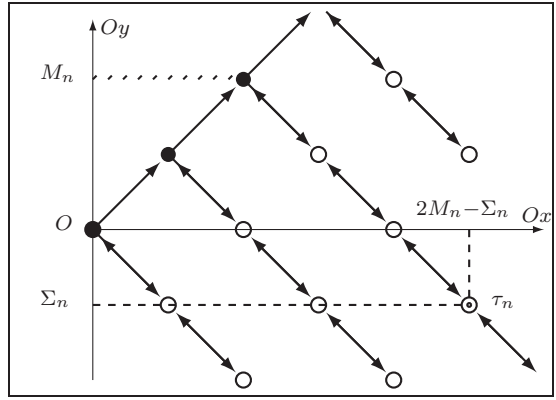
\begin{figure}
\centering
\framebox{\begin{picture}(70,50)(20,9)

\put(30,30){\vector(1,0){60}}
\put(25,30){$\scriptstyle O$}
\put(31,56){$\scriptstyle  Oy$}
\put(87,31){$\scriptstyle Ox$}
\put(30,10){\vector(0,1){48}}
\put(30,30){\circle*{2}}
\put(40,40){\circle*{2}}
\put(50,50){\circle*{2}}
\thicklines
\put(50,10){\circle{2}}
\put(70,10){\circle{2}}
\multiput(30,50)(2,0){10}{\line(1,1){0.5}}
\multiput(30,20)(2,0){25}{\line(1,0){1}}
\multiput(80,21)(0,2){5}{\line(0,1){1}}
\put(83,20){$\scriptstyle \tau_n$}
\put(40,20){\circle{2}}
\put(60,20){\circle{2}}
\put(23,50){$\scriptstyle M_n$}
\put(23,20){$\scriptstyle \Sigma_n$}
\put(73,32){$\scriptstyle 2M_n-\Sigma_n$}
\put(80,20){\circle{2}}
\put(80,20){\circle{.5}}
\put(30,30){\circle{2}}
\put(50,30){\circle{2}}
\put(70,30){\circle{2}}
\put(60,40){\circle{2}}
\put(80,40){\circle{2}}
\put(70,50){\circle{2}}
\put(30,30){\vector(1,1){9}}
\put(40,40){\vector(1,1){9}}
\put(50,50){\vector(1,1){9}}
\put(30,30){\vector(1,-1){9}}
\put(40,40){\vector(1,-1){9}}
\put(50,50){\vector(1,-1){9}}
\put(61,59){\vector(1,-1){8}}
\put(42,18){\vector(1,-1){7}}
\put(52,28){\vector(1,-1){7}}
\put(62,38){\vector(1,-1){7}}
\put(72,48){\vector(1,-1){7}}
\put(38,22){\vector(-1,1){7}}
\put(48,32){\vector(-1,1){7}}
\put(58,42){\vector(-1,1){7}}
\put(68,52){\vector(-1,1){7}}
\put(62,18){\vector(1,-1){7}}
\put(72,28){\vector(1,-1){7}}
\put(48,12){\vector(-1,1){7}}
\put(68,12){\vector(-1,1){7}}
\put(58,22){\vector(-1,1){7}}
\put(78,22){\vector(-1,1){7}}
\put(68,32){\vector(-1,1){7}}
\put(78,42){\vector(-1,1){7}}
\put(87,13){\vector(-1,1){6}}
\put(82,18){\vector(1,-1){7}}
\end{picture}}
  \caption{Pitman's walk on the graph $\tilde G$}
  \label{Pitmanswalk}
\end{figure}

\section{Appendix : Asymptotics of spherical functions in rank one} 
   We describe the needed asymptotic behaviour of spherical functions on $SO(1,q)$, $SU(1,q)$ and $Sp(1,q)$ when $q \to +\infty$. We adapt Shimeno \cite{Shimeno} to this setting (which only considers the real  split case) and Oshima and Shimeno \cite{Oshima_Shimeno}. The advantage of this approach is that it is adapted to higher rank cases. The details of the computations are quite long but straightforward. Therefore we only indicate the main points of the proof where they differ from \cite{Shimeno}. 
   
   We adapt the notions of Section \ref{Inozemtsev} to the rank one case. In this case there are at most two roots, $\alpha$ and $2\alpha$ and we may suppose that $\alpha(r)=r$ when $r\in \mathfrak{a}=\R$. Their multiplicity are $(m_\alpha,m_{2\alpha})=(q-1,0)$ for  $SO(1,q)$, $(2(q-1),1)$ for $SU(1,q)$ and $(4(q-1),3)$ for $Sp(1,q)$. 
   Let 
      $$\varrho_q=\frac{1}{2}(m_\alpha+2m_{2\alpha}),$$ 
      and \begin{equation}\label{delta_q}\delta_q(r)= (e^r-e^{-r})^{m_\alpha} (e^{2r}-e^{-2r})^{m_{2\alpha}}.\end{equation}
Then (see (\ref{eqn:sutherland}))
\begin{align*}
H_{CMS}&=\delta_q^{1/2}\circ\{\Delta_R  +\rho_q^2\}\circ \delta_q^{-1/2} \\ 
&=\frac{d^2}{ {dr^2}}-\frac{m_\alpha(m_\alpha+2m_{2\alpha}-2)}{ \sinh^2 r}-\frac{m_{2\alpha}( m_{2\alpha}-2)}{ \sinh^2 2r}.\end{align*}
For $\lambda\in \C$, the spherical function  $\varphi_\lambda$ satisfies
$$\Delta_R \tilde \varphi_\lambda=(\lambda^2-\varrho_q^2)\tilde \varphi_\lambda,$$
where $\tilde \varphi_\lambda(r)=\varphi_\lambda(D_1(r))$, $r\in \R$,
therefore,
$$H_{CMS} (\delta_q^{1/2}\tilde\varphi_\lambda)=\lambda^2\delta_q^{1/2}\tilde \varphi_\lambda.$$
There exists a unique function $\Psi_{CMS}(\lambda,q,r), r\in \R,$ of the form
\begin{equation}
\label{eqn:hhg}
\Psi_{CMS}(\lambda,q,r)=\sum_{n\in\N}b_n(\lambda,q)
e^{(\lambda-n)r}, \quad b_0(\lambda,q)=1,
\end{equation} such that 
$
H_{CMS}\Psi_{CMS}=\lambda^2\,\Psi_{CMS},
$
(see \cite[(17)]{Shimeno}). When $q\to +\infty$, 
  $$\lim H_{CMS}(r+\log m_\alpha)=H_T,$$
  where $H_T$ is the Toda type Hamiltonian
  $$H_T=\frac{d^2}{dr^2}-e^{-2r}.$$
There is also a unique function $\Psi_\text{T}(\lambda,r), r \in \R,$ of the form
\begin{equation}
\label{eqn:psitoda}
\Psi_T(\lambda,r)=\sum_{n\in \N}
b_n(\lambda)e^{(\lambda-n)r},\quad b_0(\lambda)=1,
\end{equation}
 such that 
   $H_T \Psi_T= \lambda^2 \Psi_T,$ (\cite{Shimeno}, notice that this is the function denoted $\Psi_T(-\lambda,-r)$ in \cite{Shimeno}).
   \begin{lemma} \label{prop1Shi}
If $\lambda\in\mathfrak{a}_\mathbb{C}^*$ and $2\lambda \not \in \Z^*$, then 
$$\lim_{q \to +\infty}Ê\frac{1}{m_\alpha^\lambda}\Psi_{CMS}(\lambda,q,r+\log m_\alpha)=\Psi_T(\lambda,r)$$   \end{lemma}
   \begin{proof} The proof is similar to the one of Proposition 1 in \cite{Shimeno}.
  \end{proof}
     Let \begin{equation}\label{alpha_q}a(q)=\frac{\Gamma({m_\alpha/2})^2}{\Gamma(m_\alpha)2^{1+3m_{2\alpha}/2}},\end{equation} and
    \begin{equation}\label{g_q}g_q(\lambda,r)=a(q)(\delta_q^{1/2}\tilde\varphi_\lambda)(\log m_\alpha +r)-K_{\lambda}(e^{-r}).\end{equation}
 \begin{proposition}\label{convsphe}
Uniformly on $\lambda$ in a small neighborhood of $0$ in $\C$, $\lambda g_q(\lambda,r)$ and its derivatives in $\lambda$ converge to $0$ as $q\to +\infty$.
\end{proposition}
       \begin{proof}
 Using the Harish Chandra spherical function expansion of $\varphi_\lambda$ (see \cite[Theorems IV.5.5 and IV.6.4]{Helgason} or \cite{Gangolli_Varadarajan}), when $\lambda$ is for instance in the ball $\{\lambda\in \C; |\lambda| \leqÊ1/4\}$ and $\lambda \not = 0$, one has   $$\delta_q^{1/2}(r)\tilde\varphi_\lambda(r)=c(\lambda)\Psi_{CMS}(\lambda,q,r)+c(-\lambda)\Psi_{CMS}(-\lambda,q,r),$$
    where     $$c(\lambda)=\frac{2^{\frac{1}{2}m_\alpha+m_{2\alpha}-\lambda}\Gamma(\frac{1}{2}(m_\alpha+m_{2\alpha}+1))}{
    \Gamma(\frac{1}{2}(\frac{1}{2}m_\alpha+1+\lambda)) \Gamma(\frac{1}{2}(\frac{1}{2}m_\alpha+m_{2\alpha}+\lambda))} .$$
    It follows from Lemma \ref{prop1Shi} that
    $$\lim_{q \to +\infty}Êa(q)c(\lambda)\frac{1}{m_\alpha^\lambda}\Psi_{CMS}(\lambda,q,r+\log m_\alpha)=\Gamma(\lambda)2^{\lambda-1}\Psi_T(\lambda,r).$$   
    Hence
  $$\lim_{q \to +\infty}a(q)(\delta_q^{1/2}\tilde\varphi_\lambda)(r+\log m_\alpha)=\Gamma(\lambda)2^{\lambda-1}   \Psi_T(\lambda,r)+\Gamma(-\lambda)2^{-\lambda-1}   \Psi_T(-\lambda,r).$$
The limit can be expressed in terms of the Whittaker function for $Sl(2,\R)$ as in \cite[Theorem 3]{Shimeno}. Due to the relation between Whittaker and Macdonald functions (see, e.g.,  Bump \cite{Bump}), one finds that this limit is $K_{\lambda}(e^{-r})$, thus $g_q(\lambda,r)$ tends to 0 when $q\to +\infty$.

Now we remark that, for $r$ fixed, the functions $ \lambda g_q(\lambda,r)$ are analytic functions in $\lambda$ in the ball $\{\lambda\in \C; |\lambda| \leqÊ1/4\}$ which are uniformly bounded in $q$ by the computations of  \cite[Proposition 1]{Shimeno}. Notice that we have to multiply by $\lambda$ to avoid the singularity at $0$ of the Harish Chandra $c$ function. 
   The uniform convergence of   $\lambda g_q(\lambda,r)$ and its derivatives in $\lambda$ thus follows from Montel's theorem.
      \end{proof}
      
 \begin{corollary} \label{cor_append} As $q\to +\infty$, $g_q(\lambda,r)$ and all its derivatives at $\lambda=0$ converge to $0$.
 \end{corollary}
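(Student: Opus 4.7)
The strategy is to view the corollary as a purely complex-analytic consequence of Proposition \ref{convsphe}, using the fact that $g_q(\lambda,r)$ is actually analytic in $\lambda$ at $\lambda=0$, even though its two defining pieces are not individually analytic when written via the Harish Chandra $c$-function decomposition. The Macdonald function $K_\lambda(x)$ is entire in $\lambda$ for each fixed $x>0$ (equivalently, it is even in $\lambda$), and the spherical function $\varphi_\lambda$ is likewise entire in $\lambda$ because the apparent simple poles at $\lambda=0$ of $c(\lambda)\Psi_{CMS}(\lambda,q,r)$ and $c(-\lambda)\Psi_{CMS}(-\lambda,q,r)$ cancel when summed. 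Consequently $g_q(\lambda,r)$ is holomorphic in $\lambda$ on a disk around the origin whose radius can be taken independent of $q$ for $q$ large.

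Granting this, the deduction is a one-line application of Cauchy's estimate. Fix $\epsilon>0$ small enough that $g_q(\cdot,r)$ is holomorphic on $\{|\lambda|\le\epsilon\}$ for all sufficiently large $q$, and write
\begin{equation*}
g_q^{(k)}(0,r)
=\frac{k!}{2\pi i}\oint_{|\lambda|=\epsilon}\frac{g_q(\lambda,r)}{\lambda^{k+1}}\,d\lambda
=\frac{k!}{2\pi i}\oint_{|\lambda|=\epsilon}\frac{\lambda\, g_q(\lambda,r)}{\lambda^{k+2}}\,d\lambda,
\end{equation*}
where $g_q^{(k)}$ denotes the $k$-th partial derivative in $\lambda$. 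The trivial bound gives
\begin{equation*}
\bigl|g_q^{(k)}(0,r)\bigr|\;\le\;\frac{k!}{\epsilon^{k+1}}\sup_{|\lambda|=\epsilon}\bigl|\lambda\, g_q(\lambda,r)\bigr|.
\end{equation*}
By Proposition \ref{convsphe} the supremum on the right tends to $0$ as $q\to+\infty$, so the left-hand side does as well; this yields the corollary for every $k\ge 0$.

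The only non-routine point is verifying that the disk of analyticity can be chosen independent of $q$, i.e.\ that the cancellation of the poles of $c(\pm\lambda)$ at $\lambda=0$ is quantitatively uniform in $q$. This is however implicit in the proof of Proposition \ref{convsphe}, where the factor $\lambda$ is inserted precisely to tame the individual $c$-function singularities before taking the limit; the resulting uniform bound on $\lambda g_q(\lambda,r)$ on $|\lambda|=\epsilon$ is exactly what feeds the Cauchy estimate above. No new calculation is required beyond what has already been done.
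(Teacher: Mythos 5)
Your proof is correct, but it follows a different route from the paper's one-line argument. The paper observes the elementary Leibniz identity
\[
\frac{d^n}{d\lambda^n}g_q(\lambda,r)\Big|_{\lambda=0}=\frac{1}{n+1}\frac{d^{n+1}}{d\lambda^{n+1}}\bigl(\lambda g_q(\lambda,r)\bigr)\Big|_{\lambda=0},
\]
and then simply invokes the part of Proposition~\ref{convsphe} asserting that the $\lambda$-derivatives of $\lambda g_q(\lambda,r)$ tend to $0$. Your version instead runs Cauchy's integral formula and the resulting Cauchy estimate, which is also valid and in fact needs only the sup-norm convergence of $\lambda g_q(\lambda,r)$ on a fixed circle, not the convergence of its derivatives; the price is that you must first argue that $g_q(\cdot,r)$ is holomorphic on a disk of radius independent of $q$ (equivalently that the apparent pole of the two Harish Chandra $c$-function pieces cancels and that $K_\lambda$ is entire), which you correctly identify as the nontrivial point and which is indeed available from the setup of the proposition. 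Both deductions are essentially immediate once Proposition~\ref{convsphe} is in hand; the paper's Leibniz-rule version is slightly shorter since it sidesteps the analyticity discussion by using the derivative convergence that the proposition already provides.
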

 \proof This follows from the proposition and the fact that
 $$\frac{d^n}{d\lambda ^n}g_q(\lambda,r)_{\{\lambda=0\}}=\frac{1}{n+1}\frac{d^{n+1}}{d\lambda ^{n+1}}(\lambda g_q(\lambda,r))_{\{\lambda=0\}}.$$

\end{document}